\documentclass{article}
\usepackage{amsmath, amssymb}
\usepackage{tikz}
\oddsidemargin 0in
\evensidemargin 0in
\textwidth 6.5in
\textheight 8.5in
\headsep 0in

\def\qed{\ensuremath{\Box}}

\newcommand{\prop}{\ensuremath{\mathcal{P}}}

\newcounter{obs}
\newcounter{pro}
\newcounter{clm}

\newenvironment{proof}{\smallskip\par\noindent\textit{Proof. }}{\medskip\par}

\newenvironment{observation}{\refstepcounter{obs}\smallskip\par\noindent\textit{Observation \theobs.}}{\smallskip\par}

\newenvironment{proposition}{\refstepcounter{pro}\smallskip\par\noindent\textbf{Proposition \thepro.}}{\smallskip\par}

\newenvironment{claim}{\refstepcounter{clm}\smallskip\par\noindent(\theclm) }{\smallskip\par}

\newtheorem{theorem}{Theorem}
\newtheorem{lemma}{Lemma}
\newtheorem{corollary}{Corollary}

\begin{document}

\title{Strong chromatic index of chordless graphs}
\author{\begin{tabular}{p{.3\textwidth}}Manu Basavaraju\thanks{email: \texttt{manu.basavaraju@ii.uib.no}. The author was supported by European Research Council (ERC) Grant ``Rigorous Theory of Preprocessing'', reference 267959.}\\{\small\itshape University of Bergen, Norway}\end{tabular}
\and \begin{tabular}{p{.45\textwidth}}Mathew C. Francis\thanks{email: \texttt{mathew@isichennai.res.in}. The author would like to thank the support provided by the Institute of Mathematical Sciences, Chennai and the INSPIRE Faculty Award of the Department of Science and Technology, Government of India.}\\
{\small\itshape Indian Statistical Institute (ISI), Chennai}\end{tabular}}
\date{}

\maketitle

\begin{abstract}

A strong edge colouring of a graph is an assignment of colours to the edges of the graph such that for every colour, the set of edges that are given that colour form an induced matching in the graph.
The strong chromatic index of a graph $G$, denoted by $\chi'_s(G)$, is the minimum number of colours needed in any strong edge colouring of $G$.
A graph is said to be \emph{chordless} if there is no cycle in the graph that has a chord.
Faudree, Gy\'arf\'as, Schelp and Tuza~[The Strong Chromatic Index of Graphs, Ars Combin., 29B (1990), pp.~205--211] considered a particular subclass of chordless graphs, namely the class of graphs in which all the cycle lengths are multiples of four, and asked whether the strong chromatic index of these graphs can be bounded by a linear function of the maximum degree.
Chang and Narayanan~[Strong Chromatic Index of 2-degenerate Graphs, J. Graph Theory, 73(2) (2013), pp.~119--126] answered this question in the affirmative by proving that if $G$ is a chordless graph with maximum degree $\Delta$, then $\chi'_s(G) \leq 8\Delta -6$. We improve this result by showing that for every chordless graph $G$ with maximum degree $\Delta$, $\chi'_s(G)\leq 3\Delta$. This bound is tight up to to an additive constant.

\end{abstract}
\bibliographystyle{siam}

\section{Introduction}
A set of edges $M$ of a graph $G$ is said to be a \emph{matching} in $G$ if there are no two edges that share an end-point in $M$.
The set $M$ is said to be an \emph{induced matching} in $G$ if $M$ is a matching in $G$ and no two end-points that belong to different edges of $M$ are adjacent in $G$.

A strong edge colouring of a graph is an assignment of colours to the edges of the graph such that for every colour, the set of edges that are given that colour form an induced matching in the graph. It can also be defined to be a colouring of the edges of a graph such that every path in the graph uses three colours.
The strong chromatic index of a graph $G$, denoted by $\chi'_s(G)$, is the minimum number of colours needed in any strong edge colouring of $G$.
The concept of strong edge coloring was introduced by Fouquet and Jolivet~\cite{FouquetJolivet}.
Erd\"os and Ne\v set\v ril~\cite{ErdosNesetril} conjectured that for a graph $G$ with maximum degree $\Delta$,
$\chi'_s(G) = \frac{5}{4} \Delta^2$ if $\Delta$ is even and $\chi'_s(G) = \frac{1}{4} (5\Delta^2 -2\Delta+1)$ if $\Delta$ is odd.
They also gave examples where these bounds were tight. The best upper bound for $\chi'_s(G)$ is by Molloy and Reed~\cite{MolloyReed} who showed that $\chi'_s(G) \leq 1.998\Delta^2$.

The strong chromatic index of many special classes of graphs has also been studied. Faudree, Gy\'arf\'as, Schelp and Tuza~\cite{Faudree} proved that when $G$ is a graph in which all cycle lengths are multiples of four, $\chi'_s(G) \leq \Delta^2$. They asked whether this result can be improved to a linear function of the maximum degree. Brualdi and Quinn~\cite{Brualdi} improved the upper bound to $\chi'_s(G) \leq \alpha\beta$ for such graphs, where $\alpha$ and $\beta$ are the maximum degrees of the two partite sets in the graph (note that the graph is bipartite). Nakprasit~\cite{Nakprasit} proved that if $G$ is any bipartite graph and the maximum degree of one partite set is at most 2, then $\chi'_s(G) \leq 2\Delta$. 

A graph is said to be \emph{chordless} if there is no cycle in the graph that has a chord. Let $G$ be any graph such that all its cycle lengths are multiples of some fixed integer $t \ge 3$. Then it can easily be seen that the graph $G$ has to be chordless. Thus chordless graphs are a superclass of the class of graphs in which all the cycle lengths are multiples of four. Chang and Narayanan~\cite{Narayanan} proved that if $G$ is a chordless graph with maximum degree $\Delta$, then $\chi'_s(G) \leq 8\Delta -6$, thus answering the question of Faudree et al.~\cite{Faudree}. We improve this bound to $3\Delta$. In particular, we prove the following theorem.

\begin{theorem}\label{thm:main}
For any chordless graph $G$ with maximum degree $\Delta$, $\chi'_s(G)\leq 3\Delta$.
\end{theorem}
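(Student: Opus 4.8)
\noindent The plan is to prove the theorem by induction on a minimal counterexample $G$ (minimising, say, $|V(G)|+|E(G)|$): a chordless graph with $\Delta(G)\le\Delta$ and $\chi'_s(G)>3\Delta$. First I would clear away the routine reductions. We may take $G$ connected: a component $H$ with $\Delta(H)=\Delta$ would itself be a smaller counterexample, components with smaller maximum degree can be coloured within their own (smaller) bound, and since edges of distinct components are never close we may freely re-use colours across components. We may take $\Delta(G)\ge 3$: if $\Delta(G)\le 2$ then $G$ is a disjoint union of paths and cycles, and a direct check gives $\chi'_s\le 5\le 3\Delta$. Finally, since every graph of minimum degree $\ge 3$ contains a cycle with a chord (look at the endpoint of a longest path), $G$ is $2$-degenerate; hence it has vertices of degree $\le 2$, which lie along \emph{threads} (maximal paths whose internal vertices all have degree $2$) joining the \emph{branch vertices} (those of degree $\ge 3$), of which there is at least one. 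Call two edges \emph{close} if they share an endpoint or have endpoints joined by an edge; two edges must get distinct colours precisely when they are close.

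The heart of the argument is to exhibit, in any such $G$, a \emph{reducible configuration}. In the simplest form this is a single edge $e$ that is close to at most $3\Delta-1$ other edges: then $G-e$ is chordless with $\Delta(G-e)\le\Delta$, induction gives it a $3\Delta$-strong-edge-colouring, and $e$ takes any of the $\ge 1$ unforbidden colours. More generally it is a small gadget that one deletes or contracts to get a smaller chordless graph $G'$ with $\Delta(G')\le\Delta$, so that the inductive colouring extends. The contraction-type reductions cut long threads down to size: if a thread has two consecutive internal vertices $t_1,t_2$, both of degree $2$, then contracting $t_1t_2$ merely shortens the thread, introduces no chord or new cycle, and the resulting colouring lifts to $G-t_1t_2$ — the only delicate pair being the two edges flanking $t_1t_2$, which become incident after contraction and so were already required to differ — after which $t_1t_2$ has at most $2\Delta<3\Delta$ close edges and is recoloured. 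Once every thread has at most one internal vertex, I would analyse the branch structure through the subgraph $G_3$ induced by the branch vertices, which is again chordless, hence $2$-degenerate: pick a branch vertex with few branch neighbours and argue that one of its incident edges, or a neighbouring leaf/thread/pendant-triangle/$K_{2,t}$-bundle gadget, is reducible — using chordlessness both to keep the relevant second neighbourhoods small and to guarantee that no deletion or contraction creates a chord (e.g.\ the two ends of a contracted thread must be non-adjacent and not otherwise short-linked, which chordlessness around the thread supplies).

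I expect the main obstacle to be exactly this last step: assembling a list of reducible configurations that provably exhausts all chordless graphs surviving the easy reductions, and pushing the conflict/forbidden-colour counts tight enough that the constant stays exactly $3\Delta$ — the margin is thin, since a triangle with $\Delta-2$ pendants at each vertex has all $3\Delta-3$ of its edges pairwise close and so already forces $\chi'_s=3\Delta-3$ (this is presumably the extremal example behind ``tight up to an additive constant''), and a degree-$2$ vertex between two heavy branch vertices pushes the count towards $3\Delta$ as well. Two further subtleties: deleting a vertex can demote a branch vertex to degree $2$, fusing two threads and changing the branch structure, so the configurations must be robust to this — I would handle it by always choosing the configuration ``innermost'' with respect to a suitable order (e.g.\ a branch vertex extremal in a $2$-degeneracy ordering of $G_3$), so the removed piece hangs off the rest cleanly; and where a crude edge count exceeds $3\Delta-1$ one must instead count \emph{distinct} forbidden colours, exploiting that the close edges lying far from the peeled edge fall into a few cliques of the conflict graph (edges through a common vertex) whose palettes are constrained, possibly strengthening the inductive hypothesis to keep the colouring economical near the vertices involved.
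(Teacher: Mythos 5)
Your proposal is a programme rather than a proof: the entire content of the theorem is concentrated in the step you yourself flag as ``the main obstacle'', namely assembling a list of reducible configurations that exhausts all chordless graphs surviving the easy reductions and verifying that each one extends a $3\Delta$-colouring. Nothing in the proposal identifies such a list, and the preliminary reductions you do carry out (connectivity, $\Delta\ge 3$, shortening threads with two consecutive internal degree-$2$ vertices) leave essentially all of the difficulty intact. Moreover, your own observation that a triangle with $\Delta-2$ pendants at each vertex has all $3\Delta-3$ of its edges pairwise close shows that the slack available to any local delete-and-recolour argument is a constant ($3$ colours) independent of $\Delta$; a configuration-by-configuration count of forbidden colours therefore has no room for the usual over-counting, and you offer no evidence that a finite list of local reductions with counts this tight exists. (There are also smaller unverified claims along the way --- e.g.\ that contracting a thread edge cannot create a chord elsewhere in the graph, and that the colouring of the contracted graph genuinely pulls back to a partial strong edge colouring --- but those look repairable; the missing case analysis is not.)

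For comparison, the paper avoids local reductions entirely. It quotes the theorem of Machado, de Figueiredo and Trotignon that a chordless graph with $\Delta\ge 3$ has chromatic index $\Delta$, splitting $E(G)$ into $\Delta$ matchings $E_1,\dots,E_\Delta$. The problem then reduces to showing that for any matching $M$ in a chordless graph $G$, the graph $G_M$ obtained by contracting the edges of $M$ inside $G[M]$ is $3$-vertex-colourable: colouring each edge $e$ by the pair consisting of its edge colour $f(e)$ and the vertex colour of $v_e$ in $G_{E_{f(e)}}$ yields a strong edge colouring with $3\Delta$ colours. The $3$-colourability is obtained by proving $G_M$ is $2$-degenerate, via a red/blue classification of the edges of $G_M$ and a block-decomposition argument showing every non-trivial subgraph of $G_M$ has two vertices of degree at most $2$. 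If you want to complete your approach you must either supply the full configuration list with watertight colour counts, or switch to a global strategy of this kind in which the $\Delta$-edge-colouring theorem does the heavy lifting.
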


Clearly, the cycle on five vertices is an example of a graph $G$ for which $\chi'_s(G)= 3\Delta-1$. We show how, for every value of $\Delta$, one can construct graphs for which $\chi'_s$ is at least $3\Delta-2$.

\section{Preliminaries}\label{sec:prelim}

All the graphs considered in this paper are finite, simple and undirected.
Given a graph $G$, we denote its vertex set by $V(G)$ and its edge set by $E(G)$. We use the notation $|G|$ to denote $|V(G)|$ and $||G||$ to denote $|E(G)|$.
We say that $H$ is a \emph{non-trivial} subgraph of $G$ if $H$ is a subgraph of $G$ and $|H|\geq 2$.
Given $X\subseteq V(G)$, we denote by $G[X]$ the subgraph induced in $G$ by $X$. We use the notation $G-X$ to denote the graph $G[V(G)\setminus X]$. We often shorten $G-\{u\}$ to $G-u$. If $e\in E(G)$, then we let $G-e$ denote the graph with vertex set $V(G)$ and edge set $E(G)\setminus\{e\}$. The degree of a vertex $u$ in $G$ is denoted by $d_G(u)$.

A path $P$ in $G$ with $V(P)=\{v_0,v_1,\ldots,v_t\}$, where $v_i\neq v_j$ for $i\neq j$ and $E(P)=\{v_iv_{i+1}\colon 0\leq i\leq t-1\}$ is denoted by $v_0v_1\ldots v_t$. For $0\leq i\leq j\leq t$, the notation $v_iPv_j$ is shorthand for $v_iv_{i+1}\ldots v_j$.
For $X,Y\subseteq V(G)$, a path $P$ is said to be between $X$ and $Y$ if it is between a vertex in $X$ and a vertex in $Y$.
A cycle $C$ in $G$ with $V(C)=\{v_0,v_1,\ldots,v_t\}$ where $v_i\neq v_j$ for $i\neq j$ and $E(C)=\{v_iv_{i+1}\colon 0\leq i\leq t-1\}\cup\{v_tv_0\}$ is denoted as $v_0v_1\ldots v_tv_0$.

A graph $G$ is said to be $k$-connected, if there is no set of vertices $X\subseteq V(G)$ such that $G-X$ is disconnected and $|X|<k$.

Given a graph $G$, a subgraph $H$ of $G$ is said to be a \emph{block} of $G$ if $H$ is 2-connected and there is no subgraph $H'$ of $G$ such that $H'$ is 2-connected and $H$ is a subgraph of $H'$. Notice that a graph with two vertices and a single edge between them is considered to be 2-connected. Therefore, every edge in a graph forms a 2-connected subgraph of it.
We use the following version of Menger's Theorem~\cite{Diestel} for local 2-connectivity in graphs.

\begin{proposition}\label{pro:notincycle}
Let $G$ be a graph with $|G|>2$ and $a,b\in V(G)$ be two vertices such that $ab\notin E(G)$. Then there is no cycle in $G$ that contains both $a$ and $b$ if and only if there exists a vertex $x\in V(G)$ such that $a$ and $b$ are in different components of $G-x$.
\end{proposition}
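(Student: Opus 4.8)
The plan is to derive the statement from the vertex version of Menger's theorem: for two non-adjacent vertices $a$ and $b$ of a graph, the maximum number of internally vertex-disjoint $a$--$b$ paths equals the minimum size of a set $S\subseteq V(G)\setminus\{a,b\}$ for which $a$ and $b$ lie in different components of $G-S$. I would prove the two implications of the ``if and only if'' separately, noting first that in the right-hand condition the vertex $x$ is necessarily distinct from both $a$ and $b$, since otherwise the phrase ``$a$ and $b$ lie in different components of $G-x$'' is meaningless.

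For the direction that assumes such an $x$ exists: suppose towards a contradiction that some cycle $C$ of $G$ contains both $a$ and $b$. Since $x\notin\{a,b\}$, the graph $C-x$ is either $C$ itself (if $x\notin V(C)$) or a path (if $x\in V(C)$); in both cases it is connected and contains $a$ and $b$, so $a$ and $b$ lie in the same component of $G-x$, a contradiction.

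For the converse, suppose no cycle of $G$ contains both $a$ and $b$. If $a$ and $b$ already lie in different components of $G$, then any $x\in V(G)\setminus\{a,b\}$ — which exists because $|G|>2$ — witnesses the right-hand side, as deleting a single vertex cannot merge two components. So assume $a$ and $b$ lie in the same component of $G$. I claim there cannot be two internally vertex-disjoint $a$--$b$ paths: if $P_1$ and $P_2$ were such paths, then, because $ab\notin E(G)$ forces each of them to have length at least two and hence at least one internal vertex, their union would be a cycle — its vertex set is $\{a,b\}$ together with the internal vertices of $P_1$ and of $P_2$, all distinct — passing through both $a$ and $b$, contrary to assumption. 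Hence the maximum number of internally vertex-disjoint $a$--$b$ paths is exactly one (it is at least one since $a$ and $b$ are in the same component), so by Menger's theorem the minimum separator has size one; that separator is a single vertex $x$, necessarily different from $a$ and $b$, whose deletion leaves $a$ and $b$ in different components of $G-x$.

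The argument is essentially bookkeeping around a black-box application of Menger's theorem, so I do not expect a serious obstacle; the two points that need a little care are the appeal to the hypothesis $|G|>2$ to supply a spare separating vertex when $a$ and $b$ already lie in different components of $G$, and the verification that two internally vertex-disjoint $a$--$b$ paths genuinely form a cycle through $a$ and $b$ — which is precisely where the hypothesis $ab\notin E(G)$ is used, as it rules out one of the two paths being the single edge $ab$.
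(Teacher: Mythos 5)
Your proof is correct. The paper does not prove this proposition at all --- it states it as a ``version of Menger's Theorem'' with a citation to Diestel --- and your derivation from the local vertex form of Menger's theorem (two internally disjoint $a$--$b$ paths would form a cycle through $a$ and $b$ since $ab\notin E(G)$, plus the separate handling, via $|G|>2$, of the case where $a$ and $b$ already lie in different components) is exactly the intended justification.
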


For any notation that has not been defined here, please refer~\cite{Diestel}.
Now, we shall introduce some notation for the purposes of this paper.

Let $G$ be any graph and let $M$ denote a matching (not necessarily induced) in $G$. We denote by $G[M]$ the graph induced in $G$ by the endpoints of the edges in $M$. We denote by $G_M$ the graph obtained from $G[M]$ by contracting each edge $e\in M$ to a vertex labelled $v_e$ and removing all the parallel edges that result from this process.

\begin{observation}\label{obs:subgraph}
Let $G'_M$ be an induced subgraph of $G_M$ and let $M'=\{e\in M\colon v_e\in V(G'_M)\}$. Then, $G'_M=G_{M'}$.
\end{observation}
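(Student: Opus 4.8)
The plan is to prove this equality of (simple) graphs by showing that $G'_M$ and $G_{M'}$ have the same vertex set and the same edge set; the whole argument is just a careful unwinding of the definitions of the contraction operation and of ``induced subgraph''.

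First I would handle the vertex sets. By the definition of the contraction operation, $V(G_{M'})=\{v_e\colon e\in M'\}$, and the set $M'$ was defined precisely as $\{e\in M\colon v_e\in V(G'_M)\}$, so $\{v_e\colon e\in M'\}=V(G'_M)$. Hence $V(G_{M'})=V(G'_M)$. (One should also note that $M'$ is indeed a matching in $G$, which is immediate since it is a subset of the matching $M$, so that $G_{M'}$ is well defined.)

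Next I would compare the edge sets. The point to make is that for any two distinct edges $e,f\in M'$, all four of their endpoints lie in $V(G[M'])$, and $G[M']$ and $G[M]$ are both induced subgraphs of $G$; consequently an endpoint of $e$ is adjacent in $G[M']$ to an endpoint of $f$ if and only if the same holds in $G[M]$. Since $v_ev_f$ becomes an edge of $G_{M'}$ (respectively $G_M$) exactly when some endpoint of $e$ is adjacent to some endpoint of $f$ in $G[M']$ (respectively $G[M]$), and since the deletion of parallel edges after contraction only collapses such multiplicities without changing adjacency, we get that $v_ev_f\in E(G_{M'})$ iff $v_ev_f\in E(G_M)$, for all $e,f\in M'$. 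On the other hand, because $G'_M$ is an \emph{induced} subgraph of $G_M$ on the vertex set $\{v_e\colon e\in M'\}$, we have $v_ev_f\in E(G'_M)$ iff $v_ev_f\in E(G_M)$, again for all $e,f\in M'$. Combining the two equivalences yields $E(G_{M'})=E(G'_M)$, and together with the equality of vertex sets this gives $G'_M=G_{M'}$.

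I do not expect any genuine obstacle here; the only thing requiring attention is the bookkeeping among the three graphs $G$, the induced subgraphs $G[M]$ and $G[M']$, and their contractions, together with the remark that discarding parallel edges (and any loops) after contraction is carried out consistently on both sides and therefore does not affect the comparison.
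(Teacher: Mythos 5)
Your proof is correct: the paper states this as an unproved observation, and your argument is exactly the routine verification the authors had in mind. The key points --- that $V(G_{M'})=\{v_e\colon e\in M'\}=V(G'_M)$ by the choice of $M'$, and that adjacency of $v_e$ and $v_f$ in both $G_{M'}$ and the induced subgraph $G'_M$ reduces to the existence of an edge of $G$ between an endpoint of $e$ and an endpoint of $f$ --- are handled carefully and completely.
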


\begin{observation}\label{obs:expandpath}
Let $H$ be any subgraph of $G_M$.
If there is a path $v_{x_1y_1}v_{x_2y_2}\ldots v_{x_ky_k}$ in $H$, then there is a path $P$ in $G$ between $\{x_1,y_1\}$ and $\{x_k,y_k\}$ such that $V(P)\subseteq\{x_1,y_1,x_2,y_2,\ldots,x_k,y_k\}$ and $E(P)\subseteq \{x_iy_i\colon 1\leq i\leq k\}\cup \{x_ix_{i+1},x_iy_{i+1},y_ix_{i+1},y_iy_{i+1}\colon 1\leq i\leq k-1\}$.
\end{observation}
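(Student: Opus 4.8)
The plan is to prove this by a direct construction. Since $H$ is a subgraph of $G_M$, the given path lies in $G_M$, so for each $i\in\{1,\dots,k-1\}$ the vertices $v_{x_iy_i}$ and $v_{x_{i+1}y_{i+1}}$ are adjacent in $G_M$. By the definition of $G_M$ (contract each edge of $M$ in $G[M]$, then delete parallel edges) together with the fact that $G[M]$ is an induced subgraph of $G$, this means that $G$ contains an edge $e_i$ with one endpoint $a_i\in\{x_i,y_i\}$ and the other endpoint $b_{i+1}\in\{x_{i+1},y_{i+1}\}$. I would fix one such edge $e_i$ for each $i\in\{1,\dots,k-1\}$, and note that each $e_i$ lies in $\{x_ix_{i+1},x_iy_{i+1},y_ix_{i+1},y_iy_{i+1}\}$.

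Next I would record the structural fact on which the whole argument rests: the vertices $v_{x_1y_1},\dots,v_{x_ky_k}$ are pairwise distinct vertices of $G_M$, hence $x_1y_1,\dots,x_ky_k$ are pairwise distinct edges of the matching $M$, so the $k$ sets $\{x_1,y_1\},\dots,\{x_k,y_k\}$ are pairwise disjoint. With this in hand I would build a walk $P$ in $G$ as follows: start at $a_1\in\{x_1,y_1\}$, and for $i$ from $1$ to $k-1$, having arrived at a vertex of $\{x_i,y_i\}$, either proceed directly along $e_i$ to $b_{i+1}$ if that vertex is $a_i$, or (if that vertex is $b_i\neq a_i$, so that $\{b_i,a_i\}=\{x_i,y_i\}$) first traverse the matching edge $x_iy_i\in M$ from $b_i$ to $a_i$ and then traverse $e_i$ to $b_{i+1}$. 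The walk terminates at $b_k\in\{x_k,y_k\}$. (When $k=1$, take $P$ to be the trivial path consisting of the single vertex $x_1$.)

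Finally I would verify the three claimed properties. By construction the edges used by $P$ are the $e_i$'s, each lying in $\{x_ix_{i+1},x_iy_{i+1},y_ix_{i+1},y_iy_{i+1}\}$, together with a subset of the matching edges $x_iy_i$, so $E(P)$ is contained in the stated set; likewise $V(P)\subseteq\{x_1,y_1,\dots,x_k,y_k\}$; and $P$ runs from $a_1\in\{x_1,y_1\}$ to $b_k\in\{x_k,y_k\}$, hence is a path between $\{x_1,y_1\}$ and $\{x_k,y_k\}$. The one point requiring care — which I regard as the crux — is that $P$ is genuinely a \emph{path} rather than merely a walk: because $P$ visits the pairwise disjoint sets $\{x_1,y_1\},\dots,\{x_k,y_k\}$ strictly in order, using at most the two vertices of a single set at each stage, no vertex can be repeated. (Alternatively, one can argue by induction on $k$, strengthening the statement so that the endpoint in $\{x_k,y_k\}$ may be chosen with the other vertex of that pair unused by $P$; this strengthening is exactly what prevents the inductive extension step from reusing a vertex.)
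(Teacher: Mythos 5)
Your proof is correct. The paper states this as an Observation and omits the argument entirely, so there is nothing to compare against; your construction (lift each $G_M$-adjacency to an edge $e_i$ of $G$ between the pairs $\{x_i,y_i\}$ and $\{x_{i+1},y_{i+1}\}$, splice in the matching edge $x_iy_i$ when the walk enters a pair at the wrong endpoint, and use the pairwise disjointness of the pairs to conclude the walk is a path) is exactly the routine verification the authors intend the reader to supply, and you correctly identify and handle the only point needing care, namely that no vertex is repeated.
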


As we have assumed that $\Delta\geq 3$, we can use the following result of Machado, de Figueiredo and Trotignon~\cite{Trotignon}.

\begin{theorem}[\cite{Trotignon}]\label{thm:trotignon}
If $G$ is a chordless graph with maximum degree $\Delta\geq 3$, then $G$ can be edge-coloured with $\Delta$ colours.
\end{theorem}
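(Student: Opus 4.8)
The plan is to argue by contradiction from a minimal counterexample. By Vizing's theorem every simple graph has $\chi'(G)\le\Delta+1$, so it suffices to exclude $\chi'(G)=\Delta+1$. Fix a chordless graph $G$ with $\Delta\ge 3$ that is Class~$2$ and has the fewest edges. Since being chordless is preserved under deleting vertices and edges, every proper subgraph of $G$ is $\Delta$-edge-colourable, i.e. $G$ is $\Delta$-\emph{critical}. Moreover, because the edges meeting a cut-vertex $c$ number at most $\Delta$ and lie in distinct blocks, the colour classes of the blocks can be permuted block-by-block (walking down the block tree) so as to agree at every cut-vertex; hence $\chi'(G)=\max_{B}\chi'(B)$ over the blocks $B$, and a minimal counterexample may be taken $2$-connected. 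For a $2$-connected graph the chordless hypothesis is exactly the statement that $G$ is \emph{minimally $2$-connected}, since a $2$-connected graph is minimally $2$-connected precisely when no cycle of it has a chord.

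Next I would record the structure I intend to exploit. Classical facts about minimally $2$-connected graphs (Dirac, Plummer) give that $G$ has vertices of degree $2$ and that the vertices of degree at least $3$ induce a \emph{forest}. Combining this with Vizing's Adjacency Lemma applied to the critical graph $G$: for a degree-$2$ vertex $v$ with neighbours $a,b$, the lemma forces $v$ to have at least $\Delta-d(a)+1$ neighbours of degree $\Delta$, and since $v$ has only two neighbours this pins down $d(a)=d(b)=\Delta$. In particular no two degree-$2$ vertices are adjacent (a degree-$2$ neighbour would have degree $\Delta\ge 3$), so the degree-$2$ vertices form an independent set and $G$ is a forest-like ``base'' on its high-degree vertices with some edges subdivided exactly once.

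With this in hand I would run the induction directly: delete a well-chosen degree-$2$ vertex $v$ with neighbours $a,b$, and $\Delta$-edge-colour $G-v$ by minimality. As $d(a)=d(b)=\Delta$, in $G-v$ each of $a,b$ has degree $\Delta-1$ and therefore uses $\Delta-1$ colours, missing exactly one. If the two missing colours differ, assign them to $va$ and $vb$ and finish. The remaining case is that $a$ and $b$ miss the same colour $\alpha$; then one tries to recolour along a Kempe $(\alpha,\beta)$-chain issuing from $a$ so as to free a colour $\beta\ne\alpha$ at $a$, assign $va=\beta$ and $vb=\alpha$, and finish.

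The crux—and the step I expect to be the main obstacle—is making this recolouring succeed, and this is exactly where chordlessness is genuinely needed rather than mere sparsity, since a $2$-degenerate graph with $\Delta\ge 3$ need not be Class~$1$ (a five-vertex graph formed from $C_5$ by adding two chords is $2$-degenerate, overfull, hence Class~$2$, but it is not chordless). The difficulty is sharp here: because $a$ and $b$ each use \emph{every} colour except $\alpha$, for each $\beta\ne\alpha$ both $a$ and $b$ carry a $\beta$-edge and both miss $\alpha$, so $b$ is always a potential endpoint of the $(\alpha,\beta)$-chain from $a$, and the naive swap fails precisely when that chain runs from $a$ to $b$. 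To defeat this I would show that such an $(\alpha,\beta)$-alternating path, closed through the path $a\,v\,b$, produces a cycle in which the degree-$\Delta$ constraints at $a,b$ together with the forest structure of the high-degree vertices force an edge between two non-consecutive vertices, i.e. a chord, contradicting the hypothesis; here Proposition~\ref{pro:notincycle} and the single-subdivision structure help locate the separating vertex and control how the chain re-enters the neighbourhoods of $a$ and $b$. An alternative route to the same end is a structural decomposition of chordless graphs along low-order cutsets followed by assembling the block colourings across the pieces. Verifying that every way the chain (or cutset) can close off yields a genuine contradiction, with no collateral colour conflict, is the delicate core; the surrounding Vizing-style counting is routine.
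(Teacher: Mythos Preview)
The paper does not prove this theorem at all: it is quoted verbatim from Machado, de~Figueiredo and Trotignon~\cite{Trotignon} and used as a black box to feed into Lemma~\ref{lem:contrgraph}. So there is no in-paper proof to compare against. For the record, the argument in~\cite{Trotignon} goes via a structural decomposition of chordless graphs along proper $2$-cutsets and $1$-cutsets into basic pieces (essentially the ``alternative route'' you mention in passing), not via a critical-graph/Kempe-chain argument.

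On its own merits, your write-up is an outline, not a proof, and the gap is exactly where you say it is. The reduction to a $2$-connected $\Delta$-critical graph, the identification with minimally $2$-connected graphs, the Dirac--Plummer forest structure on vertices of degree $\ge 3$, and the Vizing Adjacency Lemma computation forcing $d(a)=d(b)=\Delta$ for the neighbours of a degree-$2$ vertex are all fine. But in the decisive case---$a$ and $b$ miss the same colour $\alpha$ and, for the chosen $\beta$, the $(\alpha,\beta)$-chain from $a$ terminates at $b$---you assert that closing this chain through $a\,v\,b$ yields a cycle that must contain a chord, and you give no mechanism for producing that chord. The forest structure on high-degree vertices does not by itself force any neighbour of $a$ or $b$ to lie on the chain, and Proposition~\ref{pro:notincycle} only converts ``no cycle'' into a separating vertex, which is the wrong direction here. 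A single Kempe swap is too blunt in this configuration (both endpoints miss $\alpha$, so swapping on the chain just makes both miss $\beta$ instead); one would need a genuine fan/multi-chain argument tailored to chordlessness, and you have not supplied it. Your own example of a $2$-degenerate Class~$2$ graph shows that sparsity alone is not enough, so something specific to the chordless hypothesis must be invoked at precisely this step, and that is what is missing.
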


\begin{lemma}\label{lem:contrgraph}
If for any matching $M$ in $G$, the graph $G_M$ can be vertex coloured with $k$ colours, then $G$ can be strong edge coloured with $k\chi'(G)$ colours, where $\chi'(G)$ is the chromatic index of $G$.
\end{lemma}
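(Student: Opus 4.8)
The plan is to exhibit an explicit strong edge colouring of $G$ built from two ingredients: a proper edge colouring of $G$, which exists with $\chi'(G)$ colours, and a proper vertex colouring of $G_M$ with $k$ colours. First I would fix a proper edge colouring $c\colon E(G)\to\{1,\ldots,\chi'(G)\}$, and then observe that the colour classes of $c$ are matchings $M_1,M_2,\ldots,M_{\chi'(G)}$ of $G$. For each colour class $M_i$, the hypothesis gives a proper vertex colouring $\phi_i\colon V(G_{M_i})\to\{1,\ldots,k\}$; since each vertex of $G_{M_i}$ is $v_e$ for a unique edge $e\in M_i$, this is really an assignment of a number in $\{1,\ldots,k\}$ to each edge of $M_i$. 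The proposed colouring of $G$ is then $c'(e)=(c(e),\phi_{c(e)}(v_e))$, using the palette $\{1,\ldots,\chi'(G)\}\times\{1,\ldots,k\}$, which has size $k\chi'(G)$.

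Next I would verify that $c'$ is a strong edge colouring, i.e.\ that each colour class is an induced matching. Fix a colour $(i,j)$ and let $e=xy$ and $f=uv$ be two distinct edges with $c'(e)=c'(f)=(i,j)$. Both lie in $M_i$, so they are vertex-disjoint (as $M_i$ is a matching), hence $c'$ is at least a proper edge colouring. It remains to rule out an edge of $G$ joining $\{x,y\}$ to $\{u,v\}$. Suppose for contradiction such an edge exists; then in $G[M_i]$ there is an edge between an endpoint of $e$ and an endpoint of $f$, so after contracting the edges of $M_i$ the vertices $v_e$ and $v_f$ are adjacent in $G_{M_i}$. But $\phi_i(v_e)=j=\phi_i(v_f)$, contradicting that $\phi_i$ is a proper vertex colouring of $G_{M_i}$. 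Hence no such edge exists and $\{e\in E(G)\colon c'(e)=(i,j)\}$ is an induced matching, so $c'$ is a strong edge colouring using at most $k\chi'(G)$ colours.

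I do not expect a serious obstacle here; the argument is essentially a definition-chase, and the only subtlety is making sure that parallel edges and the contraction operation are handled correctly. The key point to get right is that "adjacency of $v_e$ and $v_f$ in $G_{M_i}$" precisely captures "some endpoint of $e$ is adjacent in $G$ to some endpoint of $f$"; this follows directly from the definition of $G_M$ as the contraction of $G[M]$ (the subgraph induced by the endpoints of $M$) with parallel edges deleted, since deleting parallel edges does not affect which pairs of contracted vertices are adjacent. One should also note that an edge of $M_i$ itself, having both endpoints among $\{x,y,u,v\}$ only when $e=f$, causes no problem. With these observations the bound $\chi'_s(G)\le k\chi'(G)$ follows.
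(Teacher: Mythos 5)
Your proof is correct and follows essentially the same route as the paper: take a proper edge colouring of $G$, vertex-colour each $G_{M_i}$ with $k$ colours, and assign each edge the pair consisting of its edge colour and the vertex colour of its contracted vertex. The verification that each colour class is an induced matching is identical to the paper's argument.
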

\begin{proof}
Let $f:E(G)\rightarrow\{1,2,\ldots,\chi'(G)\}$ be a proper edge colouring of $G$.
For every $i\in\{1,2,\ldots,\chi'(G)\}$, let $E_i$ denote the set of edges of $G$ that received the colour $i$. Clearly, for each $i$, $E_i$ is a matching in $G$. We know that the graph $G_{E_i}$ can be vertex coloured with $k$ colours.
For $e\in E(G)$, let $g(e)$ be the colour given to $v_e$ in the proper $k$-vertex colouring of $G_{E_{f(e)}}$.
We define a new colouring $h$ of the edges of $G$ as follows. For every edge $e\in E(G)$, we let $h(e)=(f(e),g(e))$. Consider any two edges $ab,cd\in E(G)$. Suppose $h(ab)=h(cd)=(i,j)$, then it implies that $v_{ab},v_{cd}\in V(G_{E_i})$ and that $v_{ab}$ and $v_{cd}$ got the same colour $j$ in the proper $k$-vertex colouring of $G_{E_i}$. As this means that $v_{ab}$ and $v_{cd}$ are nonadjacent in $G_{E_i}$, it follows that there is no edge in $G$ between $\{a,b\}$ and $\{c,d\}$. Therefore $h$ is a strong edge colouring of $G$. Since $h$ uses at most $k\chi'(G)$ colours, it follows that $\chi'_s(G)\leq k\chi'(G)$.
\hfill\qed
\end{proof}

The following corollary is a direct result of combining Theorem~\ref{thm:trotignon} and Lemma~\ref{lem:contrgraph}.
\begin{corollary}\label{cor:contrgraph}
If $G$ is a chordless graph with maximum degree $\Delta\geq 3$ and if for any matching $M$ in $G$, the graph $G_M$ can be vertex coloured with $k$ colours, then $\chi'_s(G)\leq k\Delta$.
\end{corollary}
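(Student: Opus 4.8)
The plan is to obtain the corollary simply by composing the two results recalled immediately before it, namely Lemma~\ref{lem:contrgraph} and Theorem~\ref{thm:trotignon}, so there is essentially nothing new to prove.

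First I would check that the hypothesis of the corollary is literally the hypothesis of Lemma~\ref{lem:contrgraph}: we are given that for every matching $M$ in $G$ (and here it matters that $M$ need not be an induced matching) the contracted graph $G_M$ admits a proper vertex colouring with $k$ colours. Lemma~\ref{lem:contrgraph} then applies verbatim and yields $\chi'_s(G) \leq k\,\chi'(G)$.

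Next I would bound the chromatic index $\chi'(G)$. Since $G$ is chordless and its maximum degree satisfies $\Delta \geq 3$, Theorem~\ref{thm:trotignon} applies and gives $\chi'(G) \leq \Delta$. Substituting this into the inequality from the previous step yields $\chi'_s(G) \leq k\,\chi'(G) \leq k\Delta$, which is exactly the claim.

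The only point that needs any care is that the hypotheses of the two imported statements match up: the assumption $\Delta \geq 3$ must be in force for Theorem~\ref{thm:trotignon} to be applicable, and Lemma~\ref{lem:contrgraph} must (and does) tolerate matchings that are not induced. I do not expect any genuine obstacle — all the content lies in the two cited results, and the role of this corollary is to reduce the eventual proof of Theorem~\ref{thm:main} to the self-contained combinatorial task of showing that $G_M$ is $3$-colourable for every matching $M$ of a chordless graph $G$ (with the easy cases $\Delta \leq 2$ handled by hand).
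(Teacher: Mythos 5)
Your proposal is correct and is exactly the paper's argument: the corollary is obtained by combining Lemma~\ref{lem:contrgraph} (giving $\chi'_s(G)\leq k\chi'(G)$) with Theorem~\ref{thm:trotignon} (giving $\chi'(G)\leq\Delta$ for chordless graphs with $\Delta\geq 3$). Your attention to the matching hypotheses (in particular that $\Delta\geq 3$ is needed for Theorem~\ref{thm:trotignon}) is the only point of substance, and you handle it properly.
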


\section{Proof of Theorem~\ref{thm:main}}

Let $G$ be any graph and let $M$ be a matching in it. We now define a colouring of the edges of $G_M$ using two colours.

\medskip

\noindent\textbf{Colouring the edges of $G_M$: }
We colour each edge of $G_M$ either red or blue as follows.
The edge $v_{ab}v_{cd}$ in $G_M$ is coloured with \emph{red} if there exist $\{\{p,q\},\{r,s\}\}=\{\{a,b\},\{c,d\}\}$  such that $pr\in E(G)$, $d_{G[M]}(p)=2$ and $d_{G[M]}(q)>2$.
Otherwise, the edge is coloured \emph{blue}.

From here onwards, whenever we consider a graph $G_M$, where $G$ is a graph and $M$ is a matching in $G$, we shall assume $G_M$ to have an implicit colouring of its edges as described above.

\begin{lemma}\label{lem:rededgestruc}
Let $G$ be a graph and $M$ be a matching in it. If $H$ is a subgraph of $G_M$ that contains only red edges,
then $||H||\leq |H|$.
\end{lemma}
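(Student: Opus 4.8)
The plan is to exhibit an orientation of the red edges of $G_M$ in which every vertex has out-degree at most $1$; the lemma then follows at once. Indeed, if $H$ is any subgraph of $G_M$ all of whose edges are red, then restricting such an orientation to $E(H)$ can only decrease out-degrees, so every vertex $v$ of $H$ satisfies $d^+_H(v)\le 1$ (where $d^+_H(v)$ denotes the out-degree of $v$ within $H$), and hence $||H||=\sum_{v\in V(H)}d^+_H(v)\le |V(H)|=|H|$.

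To construct the orientation, recall that an edge $v_{ab}v_{cd}$ of $G_M$ is red exactly when it admits a \emph{witness}: a labelling $\{\{p,q\},\{r,s\}\}=\{\{a,b\},\{c,d\}\}$ with $pr\in E(G)$, $d_{G[M]}(p)=2$, and $d_{G[M]}(q)>2$. For each red edge, fix one such witness and orient the edge away from the vertex of $G_M$ that corresponds to the pair $\{p,q\}$ containing the degree-$2$ endpoint $p$ (call $\{p,q\}$ the \emph{source pair} of the red edge, relative to the chosen witness). Thus the edges directed out of a vertex $v_{ab}$ are precisely those red edges whose chosen witness has $\{a,b\}$ as its source pair, and this assigns each red edge exactly one direction even if it happens to have valid witnesses on both sides.

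The heart of the argument is to show that no vertex has two out-edges. Suppose $v_{ab}$ had out-edges to $v_{cd}$ and to $v_{ef}$ with $\{c,d\}\neq\{e,f\}$, and let $p_1,p_2\in\{a,b\}$ be the degree-$2$ endpoints in the respective witnesses. If $p_1=p_2=:p$, then since $\{c,d\}$ and $\{e,f\}$ are the vertex sets of distinct edges of $M$ and hence disjoint, the two witnessing edges of $G$ at $p$ provide two distinct non-matching neighbours of $p$ in $G[M]$ (both lying in $V(G[M])$, being endpoints of matching edges), so $d_{G[M]}(p)\ge 3$, contradicting $d_{G[M]}(p)=2$. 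If instead $p_1\neq p_2$, then $\{p_1,p_2\}=\{a,b\}$, so $p_2$ is the matching partner of $p_1$; but then the witness of $v_{ab}v_{cd}$ forces $d_{G[M]}(p_2)>2$ while the witness of $v_{ab}v_{ef}$ forces $d_{G[M]}(p_2)=2$, again a contradiction. Hence every out-degree is at most $1$, and the lemma follows.

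I expect the case distinction in the third paragraph to be the only real obstacle; everything else is bookkeeping. The points that need a little care are checking that the extra neighbours produced in the $p_1=p_2$ case genuinely lie in $V(G[M])$ and are distinct (which is exactly where one uses that distinct edges of $M$ span disjoint vertex sets), and keeping track of which endpoint-pair of a red edge is designated as its source pair so that the out-edge count at each vertex is interpreted correctly.
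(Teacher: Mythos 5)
Your proof is correct and is essentially the paper's argument in a different guise: orienting each red edge out of the vertex $v_{pq}$ whose witness pair contains the degree-$2$ endpoint, and showing every out-degree is at most $1$, is exactly the paper's injection $g(e)=v_{pq}$ from red edges to their endpoints, and your two cases ($p_1=p_2$ versus $p_1\neq p_2$) correspond precisely to the paper's case split on whether the second edge's degree-$2$ witness is $p$ or $q$. The argument goes through as written.
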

\begin{proof}
Consider an edge $e=v_{ab}v_{cd}\in E(H)$. Since we coloured that edge red in $G_M$, we know that there exist $\{\{p,q\},\{r,s\}\}=\{\{a,b\},\{c,d\}\}$ such that $pr\in E(G)$, $d_{G[M]}(p)=2$ and $d_{G[M]}(q)>2$.
Define $f(e)=p$ and $g(e)=v_{pq}$.
Notice that $g$ is a function that maps every red edge in $H$ to one of its endpoints. This also means that for any edge $e\in E(H)$, $g(e)\in V(H)$. Now to prove that $||H||\leq |H|$ it suffices to prove that $g$ is injective. We shall prove this as follows.
Suppose there exist two edges $e_1,e_2\in E(H)$ such that $g(e_1)=g(e_2)=v_{pq}$. Let $e_1=v_{pq}v_{ab}$ and $e_2=v_{pq}v_{cd}$. Let us assume without loss of generality that $f(e_1)=p$. This implies that $d_{G[M]}(q)>2$. If $f(e_2)=q$, it would imply that $d_{G[M]}(q)=2$, which is a contradiction. Therefore, we have $f(e_2)=p$. But this would mean that in $G$, the neighbourhood of $p$ contains a vertex in $\{a,b\}$, a vertex in $\{c,d\}$ and $q$. This gives us $d_{G[M]}(p)\geq 3$, which is a contradiction to our assumption that $f(e_1)=p$.
Therefore, $g$ is injective.
\hfill\qed
\end{proof}

\begin{lemma}\label{lem:rededgedeg}
Let $G$ be any graph and $M$ be a matching in it. If $H$ is a nonempty subgraph of $G_M$ that contains only red edges, then either $H$ contains at least three vertices of degree at most 2 or $H$ is a single edge.
\end{lemma}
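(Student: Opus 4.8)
The plan is to apply Lemma~\ref{lem:rededgestruc} locally. Any subgraph of $H$ is again a subgraph of $G_M$ all of whose edges are red, so Lemma~\ref{lem:rededgestruc} applies to it; in particular, applied to a connected component $C$ of $H$ it gives $||C||\le|C|$, while connectedness gives $||C||\ge|C|-1$. Hence every component of $H$ is a tree or a unicyclic graph. Moreover, distinct components of $H$ are pairwise nonadjacent, so the degree of a vertex in $H$ equals its degree in the component of $H$ containing it; it therefore suffices to find the required low-degree vertices inside a single component. I would split into two cases according to whether $H$ has a component on at least three vertices.

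In the first case, let $C$ be a component of $H$ with $|C|\ge 3$; I claim $C$ itself contains three vertices of degree at most $2$. Suppose not, and let $s\le 2$ be the number of vertices of $C$ of degree at most $2$, so the other $|C|-s$ vertices have degree at least $3$; since $C$ is connected with at least two vertices, every vertex of $C$ has degree at least $1$. Summing degrees, $2|C|\ge 2||C||=\sum_{v\in V(C)}d_C(v)\ge 3(|C|-s)+s=3|C|-2s\ge 3|C|-4$, forcing $|C|\le 4$. It then remains only to check the connected graphs on three or four vertices with at most $|C|$ edges — namely $P_3$ and $K_3$ for $|C|=3$, and $P_4$, $K_{1,3}$, $C_4$, and the triangle with one pendant edge for $|C|=4$ — each of which has at least three vertices of degree at most $2$, a contradiction. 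Since these low-degree vertices of $C$ are also low-degree in $H$, the first alternative of the statement holds.

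In the remaining case every component of $H$ has at most two vertices, hence is a single vertex or a single edge (a unicyclic graph would have at least three vertices). Since $H$ is nonempty, some component is a single edge, contributing two vertices of degree $1$. If it is the only component then $H$ is a single edge, which is the second alternative. Otherwise $H$ has at least one further component, which furnishes a third vertex of degree at most $1$, so again the first alternative holds.

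I do not anticipate a genuine obstacle. The only mild care needed is in the degree-sum step — when a component has fewer than two vertices of degree at most $2$ the same bound $3|C|-4$ (indeed a stronger one) still holds, and one must actually confirm by inspection that each of the small graphs on three or four vertices has three vertices of degree at most $2$. A direct structural argument treating trees and unicyclic graphs separately is also available, but the short finite check above is the cleanest to write.
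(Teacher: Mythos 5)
Your proof is correct and follows essentially the same route as the paper: both combine the edge bound $||H||\le|H|$ from Lemma~\ref{lem:rededgestruc} with a degree-sum count to force any purported counterexample down to at most four vertices, then dispose of the small cases by inspection. The only differences are organizational --- you work component by component where the paper first strips out the isolated vertices, and you enumerate the connected graphs on three and four vertices where the paper uses a short counting argument for the $|H|=4$ case.
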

\begin{proof}
We shall first prove this lemma for the case when there are no degree 0 vertices in $H$.
Suppose there does not exist three vertices of degree at most 2 in $H$. 
Let $X$ denote the set of vertices of $H$ that have degree at most 2.
Since every vertex in $X$ has degree at least one in $H$, $||H||\geq (3(|H|-|X|)+|X|)/2$. Combined with Lemma \ref{lem:rededgestruc}, this gives $|H|\geq ||H||\geq (3(|H|-|X|)+|X|)/2$, which implies that $|H|\leq 2|X|$. As $|X|\leq 2$, we have $|H|\leq 4$. If $|H|=4$, then since $|X|\leq 2$, we have at least two vertices of degree 3 in $H$, which are in fact two vertices that are adjacent to every other vertex of $H$. But this would mean that $||H||\geq 5$, which is a contradiction. If $|H|=3$, then it cannot have a vertex of degree 3 and therefore $|X|=|H|=3$, which is again a contradiction. Therefore, it has to be the case that $|H|=2$, or, in other words, $H$ is a single edge.

Now let us consider the case when $H$ contains degree 0 vertices. Let $H'$ be the graph resulting from the removal of all degree 0 vertices from $H$. By the argument above, either there exist three vertices of degree at most 2 in $H'$ or $H'$ is a single edge. In the former case, the three vertices of degree at most 2 in $H'$ are also three vertices of degree at most 2 in $H$. In the latter case, the two vertices of $H'$ and any vertex of degree 0 in $H$ together gives three vertices of degree at most 2 in $H$.
\hfill\qed
\end{proof}

\medskip

\noindent\textbf{Property $\prop$}

\medskip

Let $G$ be a graph and $M$ a matching in it. $(G,M)$ is said to satisfy property $\prop$ if:
\begin{enumerate}
\renewcommand{\labelenumi}{(\roman{enumi})}
\itemsep 0in
\item $G_M$ is 2-connected, and
\item For every $ab\in E(G)\setminus M$ such that $a,b\in V(G[M])$, there is no cycle in $G-ab$ that contains both $a$ and $b$ (i.e., $ab$ is not a chord of a cycle in $G$).
\end{enumerate}

\begin{observation}\label{obs:subgraphprop}
Let $G$ be a graph and $M$ a matching in it such that $(G,M)$ satisfies the property $\prop$.
Let $M'\subseteq M$ such that $G_{M'}$ is 2-connected.
Then, $(G,M')$ also satisfies the property $\prop$.
\end{observation}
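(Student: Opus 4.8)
The plan is to notice that there is really only one thing to check. Condition (i) of property $\prop$ for the pair $(G,M')$ — namely that $G_{M'}$ is $2$-connected — is handed to us directly in the hypothesis, so nothing needs to be done there. Hence the whole proof reduces to verifying condition (ii) for $(G,M')$, using that it holds for $(G,M)$.

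First I would record the trivial containment $V(G[M'])\subseteq V(G[M])$: since $M'\subseteq M$, every endpoint of an edge of $M'$ is an endpoint of an edge of $M$. The key step is then the following observation: if $ab\in E(G)\setminus M'$ with $a,b\in V(G[M'])$, then in fact $ab\notin M$. Indeed, suppose $ab\in M$; since $a\in V(G[M'])$, some edge $e\in M'$ is incident with $a$, and as $e\in M'\subseteq M$ and $M$ is a matching, $a$ lies on a \emph{unique} edge of $M$, which must therefore be $ab$. Thus $e=ab\in M'$, contradicting $ab\notin M'$.

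Combining these two facts finishes the argument: given any $ab\in E(G)\setminus M'$ with $a,b\in V(G[M'])$, we obtain $ab\in E(G)\setminus M$ and $a,b\in V(G[M])$, so condition (ii) of property $\prop$ applied to $(G,M)$ tells us there is no cycle in $G-ab$ containing both $a$ and $b$ — which is precisely condition (ii) for $(G,M')$.

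I do not expect a genuine obstacle here; the statement is essentially a bookkeeping fact about passing to a sub-matching. The only place that requires a moment's care is the middle step, where one must argue that an edge which is ``outside'' $M'$ yet joins two $M'$-covered vertices could not have been a matching edge of the larger matching $M$ either; this is where the matching property of $M$ together with $M'\subseteq M$ is used.
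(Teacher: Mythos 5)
Your proof is correct and is exactly the intended (routine) verification, which the paper leaves implicit by stating this as an Observation without proof. The one point of substance — that an edge of $E(G)\setminus M'$ joining two $M'$-covered vertices cannot lie in $M$, so condition (ii) for $(G,M)$ indeed covers all edges that condition (ii) for $(G,M')$ requires — is handled correctly using the matching property of $M$.
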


\begin{lemma}\label{lem:blueedgeconn}
Let $G$ be a graph and $M$ a matching in it such that $(G,M)$ satisfies the property $\prop$.
Let $e\in E(G_M)$ be an edge coloured blue.
Then, $G'_M=G_M-e$ is not 2-connected.
\end{lemma}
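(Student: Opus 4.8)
The plan is to assume, for contradiction, that $G_M-e$ is $2$-connected, and then exhibit a cycle of $G$ having a chord that lies in $E(G)\setminus M$ with both endpoints in $V(G[M])$, contradicting part (ii) of property $\prop$. Write $e=v_{ab}v_{cd}$. The first thing I would pin down is the set of edges of $G$ joining $\{a,b\}$ to $\{c,d\}$: using $\prop$(ii) (any two of them, together with the matching edges $ab$ and $cd$, would create a $4$-cycle with a chord) this set is either a single edge, a perfect matching between $\{a,b\}$ and $\{c,d\}$, or a pair of edges with a common endpoint; after renaming I may assume $ac$ is one of these edges, and that $ad,bc\notin E(G)$ unless the two connecting edges share the endpoint $a$.

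Next I would set up the expansion machinery. If $|G_M|=2$ there is nothing to prove, so $|G_M|\ge 3$ and $G_M-e$ is $2$-connected with at least three vertices; in particular every vertex has degree at least two in $G_M-e$. Since $v_{ab}v_{cd}\notin E(G_M-e)$, Proposition~\ref{pro:notincycle} shows $v_{ab}$ and $v_{cd}$ lie on a common cycle $C$ of $G_M-e$; moreover, by subdividing a prescribed edge and reapplying Proposition~\ref{pro:notincycle}, I may insist that $C$ pass through any one prescribed edge incident to $v_{ab}$ and any one prescribed edge incident to $v_{cd}$. Let $P,Q$ be the two arcs of $C$ between $v_{ab}$ and $v_{cd}$; each has at least two edges. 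Re-running the construction behind Observation~\ref{obs:expandpath} along an arc, and using that the $M$-edges met along it are pairwise disjoint, produces a path of $G$ (living inside $G[M]$) whose vertex set lies in the union of those $M$-edges, which uses no edge between $\{a,b\}$ and $\{c,d\}$ (since $ab$ and $cd$ are non-consecutive on the arc), and which meets each of $ab$ and $cd$ in exactly one vertex — and I get to choose that vertex of $ab$ (resp.\ of $cd$) among the endpoints of $ab$ (resp.\ of $cd$) adjacent in $G$ to an endpoint of the first (resp.\ last) interior $M$-edge of the arc. Doing this for $P$ and $Q$ yields paths $P'$ and $Q'$ between $\{a,b\}$ and $\{c,d\}$ that meet only inside $\{a,b,c,d\}$; together with the matching edges $ab$ and $cd$ they close up into a cycle $Z$ of $G$, and $Z$ uses none of the edges between $\{a,b\}$ and $\{c,d\}$.

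The heart of the argument is to choose $C,P',Q'$ so that $Z$ passes through both $a$ and $c$; then $ac\in E(G)\setminus M$ with $a,c\in V(Z)$ is a chord of $Z$, and we are done. This is where blueness enters. Because $d_{G_M-e}(v_{ab})\ge 2$ and $e$ is incident to $v_{ab}$, we have $d_{G_M}(v_{ab})\ge 3$; since $d_{G_M}(v_{ab})\le(d_{G[M]}(a)-1)+(d_{G[M]}(b)-1)$, it follows that $d_{G[M]}(a)+d_{G[M]}(b)\ge 5$, and the fact that $e$ is blue (applied to the connecting edge $ac$ with $p=a,q=b,r=c$) rules out $d_{G[M]}(a)=2$; as $a$ is adjacent to $b$ and to $c$ this forces $d_{G[M]}(a)\ge 3$, and symmetrically $d_{G[M]}(c)\ge 3$. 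Hence $a$ has a neighbour other than $b$ and $c$, which yields an edge at $v_{ab}$ ``coming from $a$'' (to the $M$-edge of that neighbour, and distinct from $e$), and likewise an edge at $v_{cd}$ ``coming from $c$''; routing $C$ through these two edges pins the relevant endpoints of $P'$ and $Q'$, and a short check of the (at most four) configurations of the two endpoints that remain free shows $Z$ always contains $a$ and $c$.

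The step I expect to be the main obstacle is exactly this bookkeeping: making sure the expansions of the two arcs really meet $ab$ and $cd$ in the single vertices I want, and that the closed-up object $Z$ is a genuine simple cycle avoiding $ac$. There are also two degenerate configurations to dispose of — when the two connecting edges share the endpoint $a$, or when $d_{G[M]}(a)=3$ so that $v_{cd}$ is the only neighbour of $v_{ab}$ in $G_M$ reachable ``from $a$'s side'' — and in those sub-cases I would instead use the second connecting edge (say $ad$) to route $Z$ through $a$ as $a\,b\,[P']\,c\,d\,a$. It is the impossibility of such a rerouting for a \emph{red} edge (where $v_{ab}$ can be a genuine dead end) that makes the hypothesis ``$e$ is blue'' essential.
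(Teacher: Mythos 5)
Your proposal is correct in outline but takes a genuinely different route from the paper. The paper argues in the contracted graph ``downwards'': assuming $G_M-e$ is $2$-connected, it picks one connecting edge $rs$, applies $\prop$(ii) and Proposition~\ref{pro:notincycle} to get a vertex $x$ separating $r$ from $s$ in $G-rs$, and then shows that either both sides of this cut contain $M$-edges (in which case some $v_{xx'}$ is a cutvertex of $G_M-e$, a contradiction) or one side contains none, which forces $rx\in M$, $d_{G[M]}(r)=2$ and $d_{G[M]}(x)\geq 3$, i.e.\ $e$ would have been coloured red. You instead argue ``upwards'': blueness plus $d_{G_M-e}(v_{ab})\geq 2$ gives $d_{G[M]}(a)\geq 3$ and $d_{G[M]}(c)\geq 3$, and you lift a cycle of $G_M-e$ through two well-chosen edges at $v_{ab}$ and $v_{cd}$ to a cycle of $G-ac$ through both $a$ and $c$, contradicting $\prop$(ii) directly. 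Your degree computation is right, the fact that two prescribed edges of a $2$-connected graph lie on a common cycle is standard, and your claim that each arc-expansion can be made to start (resp.\ end) at any endpoint of $ab$ (resp.\ $cd$) having a $G$-neighbour in the adjacent interior $M$-edge is sound, since the matching edge at each interior vertex can be traversed or skipped at will; I also checked that all four endpoint configurations of the two lifted arcs close up into a simple cycle through $a$ and $c$ avoiding $ac$. What the paper's route buys is the near-total absence of case analysis; what yours buys is an explicit exhibition of the forbidden chord and a transparent explanation of \emph{why} blueness is needed (it supplies the extra edges used to route the cycle through $a$ and $c$), which in the paper surfaces only in the final sentence.

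The thinnest point is your treatment of the degenerate configurations, and you should tighten it before calling the proof complete. First, your two listed sub-cases coincide: if $d_{G[M]}(a)=3$ with third neighbour $d$, then $ad\in E(G)$, i.e.\ the two connecting edges share the endpoint $a$. Second, the rerouting $a\,b\,[P']\,c\,d\,a$ needs $P'$ to end at $c$, hence needs an edge of $G_M-e$ at $v_{cd}$ ``coming from $c$''; this in turn needs $c$ to have a $G[M]$-neighbour outside $\{a,b,d\}$. That neighbour does exist --- $bc\notin E(G)$ because three connecting edges would already violate $\prop$(ii), and $d_{G[M]}(c)=2$ would force $d_{G[M]}(d)=2$ by blueness and hence $d_{G_M}(v_{cd})=1$, contradicting $2$-connectivity of $G_M-e$ --- but this chain of observations is not in your sketch and must be supplied (together with its mirror image when the shared endpoint is $c$ rather than $a$, where the closing walk must be rerouted once more). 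None of this breaks the argument; it is bookkeeping of exactly the kind you predicted, and it all closes.
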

\begin{proof}
Let $e=v_{ab}v_{cd}$.
Suppose that the graph $G'_M$ is 2-connected.
Let $rs$ be any edge between $\{a,b\}$ and $\{c,d\}$ in $G$ ($r\in\{a,b\}$ and $s\in\{c,d\}$).
Let $G'=G-rs$.
Since $(G,M)$ satisfies the property $\prop$, we know that there is no cycle in $G'$ that contains both $r$ and $s$.\
From Proposition~\ref{pro:notincycle}, we know that there exists a vertex $x\in V(G')$ such that $r$ and $s$ are in different components of $G'-x$.
Let $G[R]$ be the component of $G'-x$ that contains $r$ and $G[S]$ the component of $G'-x$ that contains $s$.
Clearly, there is no edge in $G$ between two components of $G'-x$ other than $rs$.

Suppose there was an edge $pq\in M$ such that $p,q\in R$ and an edge $yz\in M$ such that $y,z\in S$.
Observe that every path in $G'$ between $R$ and $S$ passes through $x$.
Suppose there is a path in $G'_M$ between $v_{pq}$ and $v_{yz}$ that does not contain a vertex of the form $v_{xx'}$, then by Observation~\ref{obs:expandpath}, there is a path in $G$ between $\{p,q\}$ and $\{y,z\}$ that does not use the edge $rs$ (recall that $r\in\{a,b\}$, $s\in\{c,d\}$ and $v_{ab}v_{cd}\notin E(G'_M)$) or the vertex $x$. This would be a path in $G'$ between a vertex in $R$ and a vertex in $S$ that does not contain $x$, which is a contradiction. This means that every path in $G'_M$ between $v_{pq}$ and $v_{yz}$ should contain a vertex of the form $v_{xx'}$, with $xx'\in E(M)$. Therefore, the removal of $v_{xx'}$ breaks every path in $G'_M$ between $v_{pq}$ and $v_{yz}$.
This contradicts our assumption that $G'_M$ is 2-connected.

Therefore, we can assume without loss of generality that there is no edge in $G[R]$ that is also in $M$.
As $r\in V(G[M])$ and $rs\not\in M$ is the only edge between $G[R]$ and any other component of $G'-x$, this leaves us with only one possibility: $rx=ab\in M$. It can be seen that $R\cap V(G[M])=\{r\}$ in the following way. Suppose there exists $u\in R\cap V(G[M])$ with $u\neq r$. Clearly, there also exists $w\in V(G)$ such that $uw\in M$. As $M$ is a matching and $rx\in M$, we have $w\neq r$, $w\neq x$ and because there is no edge in $R$ that is also in $M$, we have $w\notin R$. But $w$ cannot be in any component other than $G[R]$ in $G'-x$ as we know that $rs$ is the only edge between two components of $G'-x$. This shows that $R\cap V(G[M])=\{r\}$.

This means that the vertex $r$ had degree 2 in $G[M]$, the two edges incident on it being $rx$ and $rs$. Note that if $x$ had degree less than 3 in $G[M]$, then the degree of $v_{rx}=v_{ab}$ would be at most 2 in $G_M$ and therefore at most 1 in $G'_M$, contradicting the assumption that $G'_M$ is 2-connected. This means that $x$ has degree at least 3 in $G[M]$. But then, the edge $v_{ab}v_{cd}$ in $G_M$ would not have been coloured blue. Therefore, we can conclude that $G'_M$ is not 2-connected.

\hfill\qed
\end{proof}

Let $H$ be a graph that is not 2-connected. A block $B$ of $H$ is said to be a \emph{leafblock} of $H$ if $B$ contains exactly one cutvertex of $H$. Note that leafblocks correspond to the leaves of the block graph of $H$ \cite{Diestel}. The following propositions are easy to prove. The interested reader may refer to the appendix for their proofs.

\begin{proposition}\label{pro:leafblocks}
If $G$ is 2-connected and $e$ is an edge in $G$ such that $G-e$ is not 2-connected, then:
\begin{enumerate}
\renewcommand{\labelenumi}{(\roman{enumi})}
\renewcommand{\theenumi}{(\roman{enumi})}
\itemsep 0in
\item\label{proit:endpnotcut} the endpoints of $e$ are not cutvertices of $G-e$,
\item\label{proit:endpnotincycle} the endpoints of $e$ are not contained in any cycle of $G-e$, and
\item\label{proit:twoleafblocks} except for the case when $G$ is a single edge, $G-e$ has exactly two leafblocks that contain one end-point each of $e$.
\end{enumerate}
\end{proposition}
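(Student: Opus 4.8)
The plan is to dispose of the degenerate case and then derive all three parts from one strong fact about the cutvertices of $G-e$. If $G$ is a single edge, then $G-e$ consists of two isolated vertices, so parts (i) and (ii) hold vacuously while part (iii) is excluded by hypothesis; hence assume $|G|\geq 3$. Since $G$ is 2-connected with at least three vertices it is bridgeless, so $G-e$ is connected; being connected but not 2-connected, $G-e$ has a cutvertex. For part (i), observe that deleting the vertex $u$ from $G$ also deletes the edge $e$, so $G-u=(G-e)-u$, which is connected because $G$ is 2-connected; thus $u$ is not a cutvertex of $G-e$, and by symmetry neither is $v$.

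The crux, and the step I expect to be the main obstacle, is the following claim: for every cutvertex $c$ of $G-e$, the graph $(G-e)-c$ has exactly two components, one containing $u$ and the other containing $v$. To see this, note that $c\neq u,v$ by part (i), so the edge $e=uv$ survives the deletion of $c$ and $G-c=\bigl((G-e)-c\bigr)+e$. As $G$ is 2-connected, $G-c$ is connected, so adjoining the single edge $uv$ to the disconnected graph $(G-e)-c$ makes it connected; this is possible only if $(G-e)-c$ has exactly two components, with $u$ in one and $v$ in the other. Part (ii) is now immediate: every cutvertex of $G-e$ lies on every path from $u$ to $v$ in $G-e$, so $G-e$ contains no two internally disjoint paths from $u$ to $v$, and hence no cycle of $G-e$ contains both endpoints of $e$.

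For part (iii) I would argue through the block structure of $G-e$. By part (i) each of $u$ and $v$ lies in a unique block of $G-e$; call these $B_u$ and $B_v$. They are distinct, since if $u$ and $v$ lay in a common block $B$ then, for any cutvertex $c$ of $G-e$, the set $B\setminus\{c\}$ would be connected (it is all of $B$ when $c\notin B$, and is connected and contains both $u$ and $v$ when $c\in B$, as $B$ is 2-connected and $c\neq u,v$), keeping $u$ and $v$ in one component of $(G-e)-c$ and contradicting the claim. Now $G-e$, being connected but not 2-connected, has at least two leafblocks. If $B$ is any leafblock and $c$ its unique cutvertex, then $B\setminus\{c\}$ is exactly one component of $(G-e)-c$ (since $B$ is a leaf of the block graph, nothing outside $B$ reaches $B\setminus\{c\}$ without passing through $c$), and by the claim this component contains $u$ or $v$; hence $u\in B$ or $v\in B$. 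Since $u$ lies only in the block $B_u$ and $v$ only in $B_v$, at most one leafblock contains $u$ and at most one contains $v$, so $G-e$ has exactly two leafblocks, namely $B_u$ and $B_v$. Finally $v\notin B_u$ and $u\notin B_v$ (neither is a cutvertex, so each lies in no block but its own), so these two leafblocks contain one endpoint of $e$ apiece.

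The only delicate points are the standard facts about blocks invoked in part (iii) --- that a non-cutvertex of a connected graph with at least one edge lies in a unique block, that a connected graph that is not 2-connected has at least two leafblocks, and that deleting a leafblock's unique cutvertex from $G-e$ leaves the rest of that leafblock as a single component --- each of which is routine. Everything with real content is carried by the claim about the cutvertices of $G-e$.
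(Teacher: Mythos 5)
Your proof is correct and follows essentially the same route as the paper: your crux claim (every cutvertex of $G-e$ separates $(G-e)$ into exactly two components, one per endpoint of $e$) is precisely the paper's key auxiliary claim, obtained the same way by adding $e$ back to $(G-e)-c$, and parts (i)--(iii) are then derived from it together with the same standard block facts. The only cosmetic difference is that you deduce (iii) via the uniqueness of the block containing each (non-cut) endpoint, while the paper phrases it as each leafblock containing exactly one endpoint in its interior; these are interchangeable.
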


\begin{proposition}\label{pro:remleafblock}
Let $G$ be a graph that is connected, but not 2-connected. Let $B_1,B_2,\ldots,B_k$ be some leafblocks of $G$ and let $u_1,u_2,\ldots,u_k$ denote the respective cutvertices of $G$ that are in these leafblocks. Then, if $X=\bigcup_{i=1}^k (B_i\setminus\{u_i\})$, then $G-X$ is connected.
\end{proposition}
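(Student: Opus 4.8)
The plan is to prove the statement by showing that a path between two vertices of $G-X$ in $G$ cannot stray into $X$ at all, so that connectivity of $G$ already yields connectivity of $G-X$. The one structural fact I will need is the standard consequence of block theory that two distinct blocks of a connected graph meet in at most one vertex, and any such shared vertex is a cutvertex. Since $u_i$ is the \emph{unique} cutvertex of $G$ contained in the leafblock $B_i$, this fact implies that every edge of $G$ having an endpoint in $B_i\setminus\{u_i\}$ must have \emph{both} endpoints in $B_i$: otherwise that endpoint, lying in two distinct blocks, would be a cutvertex of $G$ in $B_i$ different from $u_i$. (In the same way each $u_j$ avoids every set $B_i\setminus\{u_i\}$, so $G-X$ still contains all the $u_j$, though this is not needed for the connectivity claim.)

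Next I would fix two vertices $a,b\in V(G-X)$ and a path $P=p_0p_1\cdots p_m$ from $a$ to $b$ in $G$, which exists since $G$ is connected, and argue that $P$ avoids $X$. Suppose not; pick an index $j$ with $p_j\in B_i\setminus\{u_i\}$ for some $i$. Since $a,b\notin X\supseteq B_i\setminus\{u_i\}$, we have $0<j<m$. Let $\ell_1$ be the largest index below $j$ with $p_{\ell_1}\notin B_i\setminus\{u_i\}$ and $\ell_2$ the smallest index above $j$ with $p_{\ell_2}\notin B_i\setminus\{u_i\}$; both exist, witnessed by $0$ and $m$. By the extremality of $\ell_1$ and $\ell_2$, all of $p_{\ell_1+1},\dots,p_j,\dots,p_{\ell_2-1}$ lie in $B_i\setminus\{u_i\}$, so the edges $p_{\ell_1}p_{\ell_1+1}$ and $p_{\ell_2-1}p_{\ell_2}$ each have an endpoint in $B_i\setminus\{u_i\}$ and hence, by the structural fact, lie entirely inside $B_i$. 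Thus $p_{\ell_1},p_{\ell_2}\in B_i$, and since neither is in $B_i\setminus\{u_i\}$ we conclude $p_{\ell_1}=p_{\ell_2}=u_i$. But $\ell_1<j<\ell_2$, so this contradicts $P$ being a path.

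Therefore $P$ lies entirely inside $G-X$, which shows $a$ and $b$ are in the same component of $G-X$; as $a$ and $b$ were arbitrary, $G-X$ is connected. The only point needing care is the geometric idea behind the contradiction — that whenever $P$ dips into the interior $B_i\setminus\{u_i\}$ of a leafblock it must both enter and leave through the cutvertex $u_i$ — but this is precisely the structural fact above, and the mild boundary cases (where $a$ or $b$ equals some $u_i$) cause no difficulty since $u_i\notin B_i\setminus\{u_i\}$. One could instead induct on $k$, deleting the interior of one leafblock at a time, but that route forces one to re-verify that $B_2,\dots,B_k$ are still leafblocks of the smaller graph and that the smaller graph still satisfies the hypothesis of the proposition; the direct argument above avoids these complications.
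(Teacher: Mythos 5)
Your proof is correct and follows essentially the same strategy as the paper's: show that any path in $G$ between two vertices of $G-X$ must avoid every $B_i\setminus\{u_i\}$, because otherwise the cutvertex $u_i$ would have to occur twice on the path. The only cosmetic difference is that the paper invokes a folklore claim (a path from outside $B_i\setminus\{u_i\}$ into $B_i\setminus\{u_i\}$ must pass through $u_i$) as a black box, whereas you re-derive that fact from the block decomposition via the edges bounding a maximal run of the path inside $B_i\setminus\{u_i\}$.
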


\begin{corollary}\label{cor:blueedgeconn}
Let $G$ be any graph and $M$ a matching in it such that $(G,M)$ satisfies the property $\prop$.
Let $e\in E(G_M)$ be an edge coloured blue in $G_M$ and let $H$ be a 2-connected subgraph of $G_M$ such that $e\in E(H)$.
Then, $H-e$ is not 2-connected.
\end{corollary}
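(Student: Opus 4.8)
The plan is to reduce Corollary~\ref{cor:blueedgeconn} to Lemma~\ref{lem:blueedgeconn} by working inside the subgraph $H$ rather than inside $G_M$ itself. The obvious obstacle is that Lemma~\ref{lem:blueedgeconn} is stated for the \emph{whole} graph $G_M$ together with the implicit red/blue colouring coming from $(G,M)$, whereas here we are handed an arbitrary $2$-connected subgraph $H$ of $G_M$; so I first need to realize $H$ as a contracted graph of the form $G'_{M'}$ for some pair $(G',M')$ satisfying property $\prop$, and I need the colour of $e$ to be preserved under this passage.

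First I would invoke Observation~\ref{obs:subgraph}: since $H$ is $2$-connected it is in particular an induced subgraph of $G_M$, so setting $M'=\{f\in M\colon v_f\in V(H)\}$ gives $H=G_{M'}$. Next I would like to say $(G,M')$ satisfies property $\prop$; Observation~\ref{obs:subgraphprop} gives exactly this, using that $G_{M'}=H$ is $2$-connected. The one subtlety is the colouring: the red/blue colouring of the edges of $G_{M'}$ is defined with respect to $G[M']$, not $G[M]$, and it is conceivable that an edge blue in $G_M$ becomes red in $G_{M'}$ (or vice versa). So the key step is to check that $e$, which is blue in $G_M$, is still blue when we view $H=G_{M'}$ with its own implicit colouring. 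I would argue this directly from the definition of the colouring: being red requires a vertex $p$ with $d_{G[M']}(p)=2$ and a vertex $q$ (its partner in $M'$) with $d_{G[M']}(q)>2$; since $G[M']\subseteq G[M]$, we have $d_{G[M']}(p)\le d_{G[M]}(p)$, and if $e=v_{ab}v_{cd}$ were red in $G_{M'}$ via some $p\in\{a,b,c,d\}$ with $d_{G[M']}(p)=2$ and partner $q$ with $d_{G[M']}(q)>2>... $ — here I need $d_{G[M]}(q)>2$ as well, which holds since $d_{G[M]}(q)\ge d_{G[M']}(q)>2$, and I need $d_{G[M]}(p)=2$, which requires ruling out that $p$ has a third $M$-neighbour outside $V(H)$. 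This last point is where I expect to spend the most care; the cleanest route is probably the observation that $v_{pq}\in V(H)$ and $H$ is $2$-connected forces $d_{G_{M'}}(v_{pq})\ge 2$, and combined with the structure one can pin down $d_{G[M']}(p)$ precisely, but if the argument does not go through cleanly I would instead re-derive the conclusion of Lemma~\ref{lem:blueedgeconn} directly for $H$ by rerunning its proof verbatim with $(G,M')$ in place of $(G,M)$, since every ingredient of that proof (Proposition~\ref{pro:notincycle}, Observation~\ref{obs:expandpath}, property $\prop$ for $(G,M')$) is available.

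Granting that $e$ is blue in the implicit colouring of $H=G_{M'}$, Lemma~\ref{lem:blueedgeconn} applied to the pair $(G,M')$ yields that $G_{M'}-e=H-e$ is not $2$-connected, which is exactly the conclusion. So the proof is essentially: restrict to the induced subgraph spanned by the vertices of $H$, check property $\prop$ is inherited (Observation~\ref{obs:subgraphprop}), check the blue colour of $e$ is inherited, and quote Lemma~\ref{lem:blueedgeconn}. The main obstacle, as noted, is the bookkeeping around the colouring being relative to the submatching; everything else is a direct appeal to results already proved.
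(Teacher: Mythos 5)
Your reduction to Lemma~\ref{lem:blueedgeconn} via the pair $(G,M')$ has a genuine gap exactly where you flagged it: you need the edge $e$, blue in $G_M$, to remain blue in the implicit colouring of $G_{M'}$, and neither of your two suggested routes establishes this. The difficulty is not mere bookkeeping. Writing $e=v_{pq}v_{rs}$ with $pr$ the connecting edge in $G$, the edge is red in $G_{M'}$ as soon as $d_{G[M']}(p)=2$ and $d_{G[M']}(q)>2$ (or symmetrically for $r,s$), while blueness in $G_M$ only tells you that $d_{G[M]}(p)\neq 2$ whenever $d_{G[M]}(q)>2$. Since passing from $M$ to $M'$ can drop $d(p)$ from $3$ to $2$ while leaving $d(q)$ above $2$ --- this happens whenever $p$ has a $G$-neighbour that is an endpoint of some $f\in M\setminus M'$, i.e.\ with $v_f\notin V(H)$, which the $2$-connectivity of $H$ does nothing to forbid --- a blue edge of $G_M$ can become red in $G_{M'}$, and then the lemma simply does not apply to $(G,M')$. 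Your fallback of rerunning the proof of Lemma~\ref{lem:blueedgeconn} with $M'$ in place of $M$ hits the same wall: that proof closes by deriving $d_{G[M]}(r)=2$ and $d_{G[M]}(x)\geq 3$ and contradicting the blueness of $e$ in $G_M$, whereas the rerun only yields $R\cap V(G[M'])=\{r\}$ and hence degree information relative to $M'$, which contradicts nothing you have assumed.

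The paper avoids all of this by never restricting the matching: apply Lemma~\ref{lem:blueedgeconn} to $(G,M)$ itself to conclude that $G_M-e$ is not $2$-connected, then invoke Proposition~\ref{pro:leafblocks}\ref{proit:endpnotincycle} to get that no cycle of $G_M-e$ contains both endpoints of $e$; since $H-e$ is a subgraph of $G_M-e$ containing both endpoints of $e$, it cannot be $2$-connected (a $2$-connected graph on at least three vertices has a cycle through any two of its vertices, and $H-e$ cannot be a single edge joining the two endpoints of $e$). Every ingredient of this shorter argument is already at your disposal; I would replace the $(G,M')$ detour with it.
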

\begin{proof}
We know by Lemma~\ref{lem:blueedgeconn} that $G_M-e$ is not 2-connected. From Proposition~\ref{pro:leafblocks}\ref{proit:endpnotincycle}, we know that there is no cycle in $G_M-e$ that contains both the endpoints of $e$. This implies that there is no 2-connected subgraph in $G_M-e$ that contains both the endpoints of $e$. Therefore $H-e$ cannot be 2-connected.
\hfill\qed
\end{proof}

\begin{proposition}\label{pro:2connpaths}
If $G$ is 2-connected and $x,y,z\in V(G)$ with $y\neq z$, then in $G$, there exist a path $P_1$ from $x$ to $y$ and a path $P_2$ from $x$ to $z$ such that $V(P_1)\cap V(P_2)=\{x\}$.
\end{proposition}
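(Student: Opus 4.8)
The plan is to reduce the statement to Proposition~\ref{pro:notincycle} by adding an auxiliary vertex that ``merges'' the two targets $y$ and $z$ into a single vertex. First I would dispose of the degenerate cases: if $x=y$, take $P_1$ to be the trivial path on the single vertex $x$ and let $P_2$ be any $x$--$z$ path in $G$ (which exists since $G$ is connected); then $V(P_1)\cap V(P_2)=\{x\}$ trivially, and the case $x=z$ is symmetric. So from now on I would assume $x$, $y$, $z$ are pairwise distinct; in particular $|V(G)|\geq 3$.

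Next, form a graph $G'$ from $G$ by adding one new vertex $w$ together with the two edges $wy$ and $wz$. I claim that no single vertex of $G'$ separates $x$ from $w$: for any $v\in V(G)\setminus\{x\}$ the graph $G-v$ is connected (because $G$ is 2-connected and $|V(G)|\geq 3$), and since at least one of $y,z$ differs from $v$, the vertex $w$ remains attached to this connected subgraph in $G'-v$, so $x$ and $w$ lie in the same component of $G'-v$. Since also $xw\notin E(G')$ (as $x\notin\{y,z\}$) and $|V(G')|>2$, Proposition~\ref{pro:notincycle} applied to $G'$ with $a=x$ and $b=w$ yields a cycle $C$ in $G'$ that contains both $x$ and $w$.

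Now $w$ has degree $2$ in $G'$, with neighbours exactly $y$ and $z$, so the two neighbours of $w$ along $C$ are exactly $y$ and $z$. Hence $C$ can be written as $w,\,y,\,a_1,\dots,a_p,\,x,\,b_1,\dots,b_q,\,z,\,w$ with all listed vertices distinct. Setting $P_1:=x\,a_p\,a_{p-1}\cdots a_1\,y$ and $P_2:=x\,b_1\cdots b_q\,z$ gives a path from $x$ to $y$ and a path from $x$ to $z$, and since all vertices of $C$ are distinct we get $V(P_1)\cap V(P_2)=\{x\}$, as required.

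The argument is essentially bookkeeping; the only points that need care are checking that no vertex of $G'$ separates $x$ from $w$ (so that Proposition~\ref{pro:notincycle} applies) and noting that it is precisely because $w$ has degree $2$ with neighbours $y$ and $z$ that the cycle through $x$ and $w$ delivers one path ending at $y$ and the other ending at $z$. One could alternatively argue directly by induction using an ear decomposition of $G$, but the auxiliary-vertex reduction is shorter.
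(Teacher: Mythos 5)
Your proof is correct, but it takes a genuinely different route from the paper. The paper invokes the fan version of Menger's theorem directly: since $G$ is 2-connected, there are two vertex-disjoint paths from $N(x)$ to $\{y,z\}$ avoiding $x$, and $x$ is prepended to each. You instead reduce to Proposition~\ref{pro:notincycle} (the only form of Menger's theorem the paper actually states) by the standard expansion trick: attach an auxiliary vertex $w$ adjacent to $y$ and $z$, verify that no single vertex separates $x$ from $w$ in the augmented graph $G'$, extract a cycle through $x$ and $w$, and split it at $x$ and $w$ using the fact that $w$ has degree 2 with neighbours exactly $y$ and $z$. Your verification of the non-separation claim is complete -- the only candidates for a separator are the vertices of $V(G')\setminus\{x,w\}=V(G)\setminus\{x\}$, and for each such $v$ you correctly use that $G-v$ is connected and that $w$ retains at least one of its two neighbours. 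What your approach buys is self-containedness: it derives the fan statement from the two-vertex local connectivity statement already available as Proposition~\ref{pro:notincycle}, rather than appealing to a stronger (set-to-set) form of Menger's theorem that the paper cites only implicitly; the cost is the construction of the auxiliary graph and a little more bookkeeping. Both arguments are sound.
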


\begin{lemma}\label{lem:twodeg2prop}
Let $G$ be any graph and $M$ be a matching in it such that $(G,M)$ satisfies the property $\prop$. Let $H$ be any 2-connected subgraph of $G_M$ that is not a single edge. Then:
\begin{enumerate}
\item If $e\in E(H)$ is any edge coloured blue in $G_M$, then $H-e$ is not 2-connected, it has exactly two leafblocks and each of the two leafblocks contains a vertex with degree at most 2 in $H$ that is not a cutvertex of $H-e$.
\item If there are no blue edges in $H$, then there exist three vertices of degree at most 2 in $H$.
\end{enumerate}
\end{lemma}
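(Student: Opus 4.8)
I would prove the two parts separately, the first by induction on $||H||$. Part 2 is immediate from Lemma~\ref{lem:rededgedeg}: if $H$ has no blue edge then every edge of $H$ is red, and since $H$ is not a single edge the lemma produces three vertices of degree at most $2$ in $H$.

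For Part 1, fix a blue edge $e=xy$ of $H$. By Corollary~\ref{cor:blueedgeconn} the graph $H-e$ is not $2$-connected, and since a $2$-connected graph is $2$-edge-connected, $H-e$ is connected; so Proposition~\ref{pro:leafblocks}\ref{proit:twoleafblocks} applies and $H-e$ has exactly two leafblocks $B_1\ni x$ and $B_2\ni y$, with respective cutvertices $u_1$ and $u_2$, and Proposition~\ref{pro:leafblocks}\ref{proit:endpnotcut} gives $x\ne u_1$ and $y\ne u_2$. By symmetry it suffices to produce the required vertex inside $B_1$. Since $u_1$ is the unique cutvertex of $H-e$ lying in $B_1$ and $y\notin V(B_1)$, every $v\in V(B_1)\setminus\{u_1,x\}$ lies in the single block $B_1$ of $H-e$ and is not incident to $e$, so $\deg_H(v)=\deg_{B_1}(v)$ and $v$ is not a cutvertex of $H-e$; hence, except when $B_1$ is a single edge (where $v:=x$ works, since then $\deg_H(x)=\deg_{B_1}(x)+1=2$), it is enough to find $v\in V(B_1)\setminus\{u_1,x\}$ with $\deg_{B_1}(v)\le 2$. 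I then distinguish cases on $B_1$. If $B_1$ is not a single edge and has no blue edge, Lemma~\ref{lem:rededgedeg} yields three vertices of degree at most $2$ in $B_1$, and discarding the at most two forbidden vertices $u_1$ and $x$ leaves one that works. If $B_1$ is not a single edge and has a blue edge $e'$, I apply the induction hypothesis to $(B_1,e')$ — legitimate because $B_1$ is a $2$-connected subgraph of $G_M$ and $||B_1||<||H||$ (as $e\notin E(B_1)$) — to obtain, in each of the two leafblocks of $B_1-e'$, a vertex of degree at most $2$ in $B_1$ that is not a cutvertex of $B_1-e'$; these two vertices $w_1,w_2$ lie in different leafblocks of $B_1-e'$, hence are distinct, and if either one avoids $\{u_1,x\}$ we are done.

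The genuinely delicate point is the residual case $\{w_1,w_2\}=\{u_1,x\}$, where the recursion surfaces nothing usable; there we may also assume $B_1$ has no vertex of degree at most $2$ outside $\{u_1,x\}$ at all (otherwise we are done directly). Unwinding the definitions then forces $\deg_{B_1}(u_1)=\deg_{B_1}(x)=2$, so $B_1$ is a $2$-connected subgraph of $G_M$ whose only two degree-at-most-$2$ vertices are $u_1$ and $x$ — the smallest instance being $K_4$ with an edge deleted — and, because $u_1$ is a cutvertex of $H-e$ inside a non-trivial block and $x$ is an endpoint of $e$, every vertex of $B_1$ has degree at least $3$ in $H$. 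Ruling this configuration out is the heart of the matter, and I expect it to use the colouring rule together with property $\prop$: the edge of $B_1$ joining the two common high-degree neighbours of $u_1$ and $x$ cannot be blue, since deleting it would leave a $2$-connected graph, contradicting Corollary~\ref{cor:blueedgeconn}; so that edge is red, whence a density count in the spirit of Lemma~\ref{lem:rededgestruc} pins down a short chorded cycle in $G_M$, and expanding it through the correspondence between $G_M$ and $G$ exhibits a chord of a cycle in $G$, contradicting property $\prop$. Everything else — Part 2, the single-edge and all-red cases, and the degree bookkeeping across the block decomposition — is routine; the real work lies in this last step and in classifying which small graphs can play the role of $B_1$.
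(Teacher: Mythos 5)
Your setup, induction scheme, and the three easy cases (single edge, all-red, and the sub-case of the inductive step where one of $w_1,w_2$ avoids $\{u_1,x\}$) all match the paper's proof. But the residual case $\{w_1,w_2\}=\{u_1,x\}$ is exactly the heart of the lemma, and your treatment of it is a genuine gap, not a routine omission. First, your framing is aimed at the wrong target: you try to show that ``$B_1$ is a $2$-connected subgraph of $G_M$ whose only degree-$\le 2$ vertices are $u_1$ and $x$'' is impossible. That configuration is \emph{not} impossible in general --- part 1 of the lemma itself only ever guarantees two such vertices in a $2$-connected subgraph containing a blue edge --- so no argument about the internal structure of $B_1$ alone can succeed. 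What must be contradicted is the \emph{positional} fact that $u_1$ and $x$ lie one in each leafblock-interior of $B_1-e'$, and the contradiction has to use the ambient graph $H$, in particular the deleted blue edge $e$ and the connectivity of $H-e$ outside $B_1$. Second, your concrete suggestion (``the edge of $B_1$ joining the two common high-degree neighbours of $u_1$ and $x$,'' a ``density count,'' a ``short chorded cycle'') presupposes a $K_4$-minus-an-edge shape and does not generalize; moreover a chord of a cycle in $G_M$ is not directly forbidden by property $\prop$, which constrains chords in $G$, so even the smallest instance is not actually ruled out by what you wrote.

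The paper closes this case by a cycle construction rather than a counting or classification argument. Suppose $x$ lies in $V(X)\setminus\{x'\}$ and $u_1$ in $V(Y)\setminus\{y'\}$, where $X,Y$ are the leafblocks of $B_1-e'$ with cutvertices $x',y'$ and $e'$ has one endpoint in each of $V(X)\setminus\{x'\}$, $V(Y)\setminus\{y'\}$. Proposition~\ref{pro:2connpaths} gives, inside $X$, internally disjoint paths from the endpoint of $e'$ in $X$ to $x$ and to $x'$, and similarly inside $Y$ to $u_1$ and to $y'$; Proposition~\ref{pro:remleafblock} gives a path from $x'$ to $y'$ in $B_1-e'$ avoiding the interiors of $X$ and $Y$, and a path from $u_1$ to $y$ (the other endpoint of $e$) in $(G_M-e)-(V(B_1)\setminus\{u_1\})$. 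Concatenating these with the edge $e=xy$ itself yields a cycle in $G_M-e'$ through both endpoints of $e'$, contradicting Lemma~\ref{lem:blueedgeconn} together with Proposition~\ref{pro:leafblocks}\ref{proit:endpnotincycle}. Without this (or an equivalent) argument the induction does not close, so the proposal as it stands does not prove the lemma.
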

\begin{proof}
We shall prove this by induction on the number of blue edges in $H$. If there are no blue edges, then by Lemma \ref{lem:rededgedeg}, we know that there are three vertices of degree at most 2 in $H$. We shall therefore assume that $H$ contains at least one blue edge, say $e$.

Let $e=ab$.
As $(G,M)$ satisfies the property $\prop$, we know by Corollary \ref{cor:blueedgeconn} that $H-e$ is not 2-connected. By Proposition~\ref{pro:leafblocks}\ref{proit:twoleafblocks}, $H-e$ has exactly two leafblocks, which we shall call $A$ and $B$. Let $a'$ denote the cutvertex of $H-e$ that is in $A$ and $b'$ denote the cutvertex of $H-e$ that is in $B$. By Propositions \ref{pro:leafblocks}\ref{proit:twoleafblocks} and \ref{pro:leafblocks}\ref{proit:endpnotcut}, we can assume without loss of generality that $a\in V(A)\setminus\{a'\}$ and $b\in V(B)\setminus\{b'\}$.
We shall now show that there exists a vertex in $A$ with degree 2 in $H$ that is not $a'$. The same arguments can be used to show that there exists a vertex in $B$ with degree 2 in $H$ that is not $b'$.

Let us consider the block $A$.

If $A$ is a single edge, then since $a\neq a'$, $a$ is a degree 1 vertex in $H-e$ and therefore a degree 2 vertex in $H$. So, we have a vertex in $A$ with degree 2 in $H$ that is not $a'$.

Suppose $A$ is not a single edge.

If every edge in $A$ is coloured red, then by Lemma \ref{lem:rededgedeg}, we know that there exists $X\subseteq V(A)$ with $|X|=3$ such that every vertex in $X$ has degree at most 2 in $A$. We therefore have $X\setminus\{a,a'\}\neq\emptyset$. Let $z\in X\setminus\{a,a'\}$. Then $z$ has degree 2 in $H-e$ and also in $H$. Thus, we again have a vertex of the required kind.

Suppose $A$ contains a blue edge, say $e'=xy$.
Since $A$ is a 2-connected subgraph of $G_M$ that is not a single edge and which
contains fewer number of blue edges than $H$, we can assume by the induction hypothesis that $A-e'$ is not 2-connected, it has exactly two leafblocks and that each of these two leafblocks contain a vertex with degree 2 in $A$ that is not a cutvertex of $A-e'$. By Proposition~\ref{pro:leafblocks}\ref{proit:twoleafblocks}, we know that $x$ and $y$ are in different leafblocks of $A-e'$. Let the leafblock of $A-e'$ containing $x$ be $X$ and the leafblock of $A-e'$ containing $y$ be $Y$. Also, let $x'$ denote the cutvertex of $A-e'$ in $X$ and let $y'$ denote the cutvertex of $A-e'$ in $Y$. Note that by Proposition~\ref{pro:leafblocks}\ref{proit:endpnotcut}, we have $x\neq x'$ and $y\neq y'$. Let the two vertices in $A-e'$ with degree 2 in $A$ that are guaranteed to exist by the induction hypothesis be $u\in V(X)\setminus\{x'\}$ and $w\in V(Y)\setminus\{y'\}$.
The proof will be completed if we show that $\{u,w\}\neq\{a,a'\}$ (this is because every vertex in $A$ other than $a$ and $a'$ has the same degree in both $A$ and $H$). For this purpose, we shall show that it is not possible that one of $a,a'$ is in $V(X)\setminus\{x'\}$ and the other in $V(Y)\setminus\{y'\}$. Note that once we show this, it follows that $\{u,w\}\neq\{a,a'\}$ as we already know that $u\in V(X)\setminus\{x'\}$ and $w\in V(Y)\setminus\{y'\}$.

Suppose for the sake of contradiction that one of $a,a'$ is in $V(X)\setminus\{x'\}$ and the other in $V(Y)\setminus\{y'\}$. We shall assume without loss of generality that $a\in V(X)\setminus\{x'\}$ and $a'\in V(Y)\setminus\{y'\}$. By Proposition \ref{pro:2connpaths}, we know that there exist paths $P_{xa}$ from $x$ to $a$ and $P_{xx'}$ from $x$ to $x'$ in $X$ such that $P_{xa}$ and $P_{xx'}$ do not meet in any vertex other than $x$. Similarly, there exist paths $P_{ya'}$ from $y$ to $a'$ and $P_{yy'}$ from $y$ to $y'$ in $Y$ such that $P_{ya'}$ and $P_{yy'}$ do not meet at any vertex other than $y$.
By Proposition~\ref{pro:remleafblock}, we know that $(A-e')-((V(X)\setminus\{x'\})\cup (V(Y)\setminus\{y'\}))$ is also connected (note that $A-e'$ is connected, as $A$ is not a single edge). Therefore, there is a path $P_{x'y'}$ in $(A-e')-((V(X)\setminus\{x'\})\cup (V(Y)\setminus\{y'\}))$ from $x'$ to $y'$. Because of the same reason, there is a path $P_{a'b}$ in $(G_M-e)-(V(A)\setminus\{a'\})$ from $a'$ to $b$ (again, $G_M$ cannot be a single edge as it has a 2-connected subgraph $H$ that is not a single edge and therefore, $G_M-e$ is connected). Now, since $ab\in E(G_M)$, we have a cycle $xP_{xx'}x'P_{x'y'}y'P_{y'y}yP_{ya'}a'P_{a'b}baP_{ax}x$ in $G_M-e'$, where $P_{y'y}$ and $P_{ax}$ denote the reversals of the paths $P_{yy'}$ and $P_{xa}$ respectively. But this is a cycle containing $x$ and $y$ in $G_M-e'$. As $e'$ was a blue edge in $G_M$, by Lemma \ref{lem:blueedgeconn}, we know that $G_M-e'$ is not 2-connected and by Proposition~\ref{pro:leafblocks}\ref{proit:endpnotincycle}, we know that there cannot 
be a cycle in $G_M-e'$ that contains both $x$ and $y$. Thus, we have a contradiction.
\hfill\qed
\end{proof}

\begin{proposition}\label{pro:deg2block}
Let $G$ be any graph such that every 2-connected subgraph of $G$ contains at least two vertices of degree at most 2. Then, every non-trivial subgraph of $G$ contains at least two vertices with degree at most 2.
\end{proposition}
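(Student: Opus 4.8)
The plan is a short structural case analysis that in every case reduces to the hypothesis; no induction is needed. I would first treat connected $H$ and then deduce the general statement.

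So let $H$ be a connected non-trivial subgraph of $G$. If $H$ is $2$-connected, then $H$ is itself a $2$-connected subgraph of $G$ and the hypothesis directly gives two vertices of degree at most $2$ in $H$. Otherwise $H$ is connected but not $2$-connected, so $H$ has at least two leafblocks; pick two of them, say $B_1$ and $B_2$, and let $u_1$ and $u_2$ be the respective cutvertices of $H$ lying in them. Each $B_i$ is $2$-connected (a single edge counts as $2$-connected) and is a subgraph of $G$, so by hypothesis $B_i$ has at least two vertices of degree at most $2$ in $B_i$; as at most one of these is $u_i$, we may choose $v_i \in V(B_i) \setminus \{u_i\}$ with $d_{B_i}(v_i) \le 2$. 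Since $v_i$ is not a cutvertex of $H$ — the only cutvertex of $H$ inside the leafblock $B_i$ is $u_i$ — the block $B_i$ is the unique block of $H$ containing $v_i$, and hence $d_H(v_i) = d_{B_i}(v_i) \le 2$. The same uniqueness forces $v_1 \ne v_2$, for otherwise $v_1$ would belong to both of the distinct blocks $B_1$ and $B_2$. Thus $v_1$ and $v_2$ are two vertices of degree at most $2$ in $H$.

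For the general case, let $H$ be a non-trivial subgraph of $G$ that is disconnected, and take two of its connected components. From each component I extract one vertex of degree at most $2$ in $H$: if the component is a single vertex, that vertex has degree $0$; otherwise the component is a connected non-trivial subgraph of $G$ and the case above yields a vertex of degree at most $2$ in the component, which is its degree in $H$ as well. These two extracted vertices lie in different components, so they are distinct, and we are done.

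The only part that calls for genuine care is the transfer of degree information between a leafblock $B_i$ and $H$: namely, that a vertex which is not a cutvertex of $H$ lies in exactly one block and therefore has the same degree in that block as in $H$; that after discarding the at-most-one cutvertex among the two low-degree vertices supplied by the hypothesis in $B_i$ a usable vertex survives; and that the two survivors coming from $B_1$ and $B_2$ are truly distinct. The remaining ingredients — that a connected non-$2$-connected graph has at least two leafblocks, and the reduction from disconnected to connected — are routine.
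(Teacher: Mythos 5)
Your proposal is correct and follows essentially the same route as the paper: reduce to a connected component, and in the connected non-2-connected case take two leafblocks, apply the hypothesis to each, discard the at-most-one cutvertex among the two guaranteed low-degree vertices, and observe that a non-cutvertex has all its neighbours inside its unique block. The only cosmetic difference is in the disconnected case, where the paper recurses into the largest component while you extract one vertex from each of two components; both work.
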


\begin{theorem}\label{thm:twin2deg}
Let $G$ be any graph and $M$ be a matching in it such that $(G,M)$ satisfies the property $\prop$. Every non-trivial subgraph of $G_M$ contains at least two vertices with degree at most 2.
\end{theorem}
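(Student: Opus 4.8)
The plan is to reduce the statement to 2-connected subgraphs using Proposition~\ref{pro:deg2block} and then to dispatch those using Lemma~\ref{lem:twodeg2prop}. Precisely, apply Proposition~\ref{pro:deg2block} with $G_M$ playing the role of the graph ``$G$'' in its statement: it then suffices to verify that \emph{every} 2-connected subgraph $H$ of $G_M$ contains at least two vertices of degree at most $2$ in $H$.

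So fix a 2-connected subgraph $H$ of $G_M$. If $H$ is a single edge, then its two endpoints have degree $1$ in $H$ and we are done. Otherwise $H$ is 2-connected and is not a single edge, so Lemma~\ref{lem:twodeg2prop} applies. If $H$ contains no blue edge, part~(2) of that lemma gives three — in particular at least two — vertices of degree at most $2$ in $H$. If $H$ contains a blue edge $e$, then part~(1) of the lemma says that $H-e$ has exactly two leafblocks $A$ and $B$, and that each of them contains a vertex of degree at most $2$ in $H$ that is not a cutvertex of $H-e$; call these vertices $u\in V(A)$ and $w\in V(B)$. The two leafblocks are distinct (by Proposition~\ref{pro:leafblocks}\ref{proit:twoleafblocks} and \ref{pro:leafblocks}\ref{proit:endpnotcut}, they contain one endpoint of $e$ each, and the two endpoints of $e$ are distinct non-cutvertices), and since $u$ and $w$ are not cutvertices of $H-e$ they each lie in a unique block of $H-e$; hence $u\in V(A)\setminus V(B)$, $w\in V(B)\setminus V(A)$, so $u\neq w$. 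In every case $H$ therefore has at least two vertices of degree at most $2$, which is what we needed; the theorem follows from Proposition~\ref{pro:deg2block}.

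I do not expect a genuine obstacle here, since the technical core has already been carried out: the induction on the number of blue edges inside Lemma~\ref{lem:twodeg2prop}, and the block-graph argument behind Proposition~\ref{pro:deg2block}. The only points requiring a moment's care are that the single-edge 2-connected subgraph must be handled separately (Lemma~\ref{lem:twodeg2prop} explicitly excludes it), and that in the blue-edge case the two degree-$\leq 2$ vertices produced are genuinely distinct — which is exactly why the lemma was phrased to guarantee vertices that are \emph{not} cutvertices of $H-e$, forcing them into the two distinct leafblocks.
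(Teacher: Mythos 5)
Your proof is correct and follows essentially the same route as the paper: reduce to 2-connected subgraphs via Proposition~\ref{pro:deg2block} and handle those via Lemma~\ref{lem:twodeg2prop}. The only (harmless) difference is that you apply the lemma directly to an arbitrary 2-connected subgraph $H$ of $G_M$ — which its statement permits — whereas the paper first passes to induced 2-connected subgraphs, writes them as $G_{M'}$ using Observations~\ref{obs:subgraph} and~\ref{obs:subgraphprop}, and then extends to all 2-connected subgraphs; your distinctness argument for the two degree-$\leq 2$ vertices in the blue-edge case is also sound.
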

\begin{proof}
Let $G'_M$ be an induced subgraph of $G_M$. By Observation~\ref{obs:subgraph}, there exists a matching $M'\subseteq M$ in $G$ such that $G'_M=G_{M'}$.

If $G_{M'}$ is 2-connected, then by Observation~\ref{obs:subgraphprop}, $(G,M')$ also satisfies the property $\prop$. If $G_{M'}$ is a single edge, then clearly, it contains two vertices with degree at most 2. If $G_{M'}$ is not a single edge, it follows from Lemma~\ref{lem:twodeg2prop} that $G_{M'}$ contains at least two vertices with degree at most 2. This shows that every 2-connected induced subgraph, and therefore every 2-connected subgraph, of $G_M$ contains at least two vertices with degree at most 2. Now, from Proposition~\ref{pro:deg2block}, we know that every non-trivial subgraph of $G_M$ contains at least two vertices with degree at most 2.
\hfill\qed
 
\end{proof}

\begin{corollary}
Let $G$ be any graph and $M$ a matching in it such that $(G,M)$ satisfies the property $\prop$. Then $G_M$ is 2-degenerate, and therefore 3-colourable.
\end{corollary}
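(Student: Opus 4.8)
The plan is to read this off directly from Theorem~\ref{thm:twin2deg} together with a standard fact about degenerate graphs. Recall that a graph is \emph{2-degenerate} if every one of its subgraphs contains a vertex of degree at most 2. Theorem~\ref{thm:twin2deg} states that every non-trivial subgraph of $G_M$ contains at least two (hence at least one) vertices of degree at most 2, so it only remains to observe that the subgraphs of $G_M$ that are not non-trivial are the single-vertex subgraph (whose vertex has degree $0\leq 2$) and the empty subgraph (for which the condition holds vacuously). Thus every subgraph of $G_M$ has a vertex of degree at most 2, which is exactly the statement that $G_M$ is 2-degenerate.

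For the colouring claim, I would invoke the elementary fact that a $k$-degenerate graph is $(k+1)$-colourable. Concretely, since $G_M$ is 2-degenerate, one can repeatedly remove a vertex of degree at most 2 to produce an ordering $v_1,v_2,\ldots,v_n$ of $V(G_M)$ in which each $v_i$ has at most two neighbours among $v_1,\ldots,v_{i-1}$; colouring the vertices greedily in this order using colours from $\{1,2,3\}$ always leaves at least one colour available for the current vertex, yielding a proper 3-colouring of $G_M$.

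I do not expect any genuine obstacle here: the result is an immediate corollary of Theorem~\ref{thm:twin2deg} plus the greedy argument above. The only point needing (minimal) care is the bookkeeping in the first paragraph, namely checking that the definition of 2-degeneracy is met not just for non-trivial subgraphs, which Theorem~\ref{thm:twin2deg} handles, but also for the trivial ones.
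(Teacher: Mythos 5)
Your proposal is correct and matches the paper's intent exactly: the paper states this corollary without proof precisely because it is an immediate consequence of Theorem~\ref{thm:twin2deg} combined with the standard greedy-colouring argument for $k$-degenerate graphs, which is what you spell out. Your extra care about trivial subgraphs is fine but inessential.
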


\begin{theorem}
If $G$ is a chordless graph and $M$ a matching in it, then every non-trivial subgraph of $G_M$ contains at least two vertices of degree at most 2.
\end{theorem}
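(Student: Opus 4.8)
The plan is to reduce the statement to Theorem~\ref{thm:twin2deg} by exploiting the fact that chordlessness of $G$ makes condition (ii) of property $\prop$ automatic, and by handling the possible failure of condition (i) through a restriction to $2$-connected subgraphs followed by Proposition~\ref{pro:deg2block}.

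First I would observe that since $G$ is chordless, no cycle of $G$ has a chord, so for \emph{any} matching $M$ in $G$ the pair $(G,M)$ trivially satisfies condition (ii) in the definition of $\prop$. The only thing that can fail for $(G,M)$ is condition (i), namely that $G_M$ need not be $2$-connected. Hence, by Proposition~\ref{pro:deg2block} applied with $G_M$ playing the role of $G$, it suffices to prove that every $2$-connected subgraph of $G_M$ contains at least two vertices of degree at most $2$.

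Next, let $H$ be a $2$-connected subgraph of $G_M$, and pass to the induced subgraph $G'_M=G_M[V(H)]$. By Observation~\ref{obs:subgraph}, $G'_M=G_{M'}$ for $M'=\{e\in M\colon v_e\in V(G'_M)\}$. Since $H$ and $G_{M'}$ have the same vertex set and $H$ is a (spanning) subgraph of $G_{M'}$, and $2$-connectivity is preserved under edge additions, $G_{M'}$ is $2$-connected; thus $(G,M')$ satisfies condition (i), and, as already noted, condition (ii), so $(G,M')$ satisfies $\prop$. Now Theorem~\ref{thm:twin2deg} applied to $(G,M')$ tells us that every non-trivial subgraph of $G_{M'}$ — in particular $H$ itself, which has at least two vertices — contains at least two vertices of degree at most $2$. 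This establishes the claim needed in the previous paragraph and hence the theorem.

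I do not expect a serious obstacle here: the whole point is the (almost trivial) observation that chordlessness discharges condition (ii) of $\prop$, after which everything is bookkeeping with the already-established Observation~\ref{obs:subgraph}, Theorem~\ref{thm:twin2deg}, and Proposition~\ref{pro:deg2block}. The only two points that require a little care are that $H$ need not be an induced subgraph of $G_M$ — which is exactly why we first pass to $G_M[V(H)]$ before invoking Observation~\ref{obs:subgraph} — and that the monotonicity of $2$-connectivity under edge addition is what lets us transport $2$-connectivity from $H$ up to $G_{M'}$ so that Observation~\ref{obs:subgraphprop} / Theorem~\ref{thm:twin2deg} become available.
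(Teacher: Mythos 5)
Your proposal is correct and follows essentially the same route as the paper: chordlessness discharges condition (ii) of $\prop$ for any matching, Observation~\ref{obs:subgraph} converts a $2$-connected (induced) subgraph of $G_M$ into some $G_{M'}$ satisfying $\prop$, Theorem~\ref{thm:twin2deg} supplies the two low-degree vertices, and Proposition~\ref{pro:deg2block} handles general non-trivial subgraphs. Your explicit handling of non-induced $2$-connected subgraphs (passing to $G_M[V(H)]$ and then applying Theorem~\ref{thm:twin2deg} to $H$ as a subgraph of $G_{M'}$) is a slightly more careful rendering of a step the paper states more tersely, but it is the same argument.
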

\begin{proof}
Consider any 2-connected induced subgraph $G'_M$ of $G_M$. From Observation \ref{obs:subgraph}, there exists $M'\subseteq M$ such that $G'_M=G_{M'}$.
If $ab\in E(G)$ and $G-ab$ contains a cycle that contains both $a$ and $b$, then clearly, $ab$ was a chord of that cycle in $G$. But this is a contradiction as we know that $G$ is chordless. Therefore, $(G,M')$ satisfies the property $\prop$. Now, from Theorem \ref{thm:twin2deg}, we know that $G_{M'}$ contains at least two vertices with degree at most 2. Thus, any 2-connected induced subgraph of $G_M$ contains at least two vertices with degree at most 2 implying that every 2-connected subgraph contains at least two vertices with degree at most 2.
Now, from Proposition \ref{pro:deg2block}, it is clear that every non-trivial subgraph of $G_M$ contains at least two vertices of degree at most 2.\hfill\qed
\end{proof}
\begin{corollary}\label{cor:main}
If $G$ is a chordless graph and $M$ a matching in it, then $G_M$ is 2-degenerate and therefore 3-colourable.
\end{corollary}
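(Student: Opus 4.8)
The plan is to obtain this as an immediate consequence of the theorem just proved, together with the standard fact that every $2$-degenerate graph is $3$-colourable. Recall that a graph is \emph{$2$-degenerate} if each of its subgraphs contains a vertex of degree at most $2$.

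First I would note that the preceding theorem says that every non-trivial subgraph of $G_M$ contains at least two vertices of degree at most $2$; in particular, every non-trivial subgraph of $G_M$ contains a vertex of degree at most $2$. A subgraph of $G_M$ that is not non-trivial has at most one vertex, and such a subgraph either is empty (so the condition is vacuously satisfied) or consists of a single isolated vertex, which has degree $0 \leq 2$. Hence every subgraph of $G_M$ contains a vertex of degree at most $2$, that is, $G_M$ is $2$-degenerate.

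Next I would invoke the elementary fact that any $2$-degenerate graph $H$ is $3$-colourable, which for completeness can be proved by induction on $|H|$: if $|H| \leq 1$ this is trivial; otherwise, applying $2$-degeneracy to $H$ itself yields a vertex $v$ with $d_H(v) \leq 2$, the graph $H - v$ is again $2$-degenerate (being $2$-degenerate is clearly inherited by subgraphs) and so admits a proper $3$-colouring by the induction hypothesis, and this colouring extends to $v$ because $v$ has at most two neighbours while three colours are available. Applying this with $H = G_M$ completes the argument.

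There is essentially no real obstacle here; the statement is a routine corollary. The only small points to be careful about are the bookkeeping for trivial (one-vertex and empty) subgraphs when passing from ``non-trivial subgraph'' in the hypothesis to ``subgraph'' in the definition of $2$-degeneracy, and the observation that the class of $2$-degenerate graphs is closed under taking subgraphs, which is exactly what makes the colouring induction go through.
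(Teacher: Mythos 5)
Your proposal is correct and matches the paper's (implicit) reasoning: the corollary is stated there without proof as an immediate consequence of the preceding theorem, and your careful handling of trivial subgraphs and the standard greedy colouring of $2$-degenerate graphs is exactly the intended argument.
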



The statement of Theorem~\ref{thm:main} can be easily verified to be true if $\Delta\leq 2$ as in this case, $G$ is a disjoint union of cycles and paths, which can be strong edge-coloured with at most 5 colours. When $\Delta\geq 3$, Corollaries~\ref{cor:main} and~\ref{cor:contrgraph} together give $\chi'_s(G)\leq 3\Delta$. This proves Theorem~\ref{thm:main}.

\bigskip

\noindent\textbf{Tightness: }
A cycle on five vertices is an example of a graph for which the strong chromatic index is $3\Delta-1$. However, for $\Delta\geq 3$, we do not know of a graph for which strong chromatic index is at least $3\Delta-1$. But for any value of $\Delta\geq 2$, we can show a family of graphs for which the strong chromatic index is at least $3\Delta-2$ as follows. Consider the graph obtained by taking a $K_{2,\Delta}$ and adding $\Delta-2$ pendant vertices to one vertex of degree two so that its degree now becomes exactly $\Delta$. This graph is shown in Figure~\ref{fig:tightness}. In the figure, the vertices $a$, $b$ and $c$ have degree $\Delta$. It can be easily verified that every edge will have to be given a different colour in any strong edge colouring of this graph. Since the number of edges in this graph is $3\Delta-2$, the strong chromatic index of this graph is $3\Delta-2$. It can also be seen that any chordless graph $G$ with maximum degree $\Delta$ that contains this graph as a subgraph will have $\chi'_s(G)
\geq 3\Delta -2$.

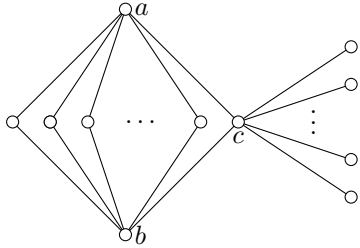
\begin{figure}
\centering
\begin{tikzpicture}
\coordinate (top) at (2,2);
\coordinate (mid1) at (.5,.5);
\coordinate (midlast) at (3.5,.5);
\coordinate (bottom) at (2,-1);
\coordinate (mid2) at (1,.5);
\coordinate (mid3) at (1.5,.5);
\coordinate (midlastbutone) at (3,.5);
\node (dots) at (2.2,.5) {$\ldots$};
\coordinate (fan1) at (5,1.5);
\coordinate (fan2) at (5,1);
\coordinate (fanlastbutone) at (5,0);
\coordinate (fanlast) at (5,-.5);
\node (vdots) at (4.5,.6) {$\vdots$};

\node [right] (labela) at (top) {$a$};
\node [right] (labelb) at (bottom) {$b$};
\node [below] (labelc) at (midlast) {$c$};

\draw (top) -- (mid1) -- (bottom);
\draw (top) -- (mid2) -- (bottom);
\draw (top) -- (mid3) -- (bottom);
\draw (top) -- (midlastbutone) -- (bottom);
\draw (top) -- (midlast) -- (bottom);
\draw (midlast) -- (fan1);
\draw (midlast) -- (fan2);
\draw (midlast) -- (fanlastbutone);
\draw (midlast) -- (fanlast);

\filldraw [fill=white] (top) circle(.08cm);
\filldraw [fill=white] (mid1) circle(.08cm);
\filldraw [fill=white] (mid2) circle(.08cm);
\filldraw [fill=white] (mid3) circle(.08cm);
\filldraw [fill=white] (midlast) circle(.08cm);
\filldraw [fill=white] (midlastbutone) circle(.08cm);
\filldraw [fill=white] (bottom) circle(.08cm);
\filldraw [fill=white] (mid2) circle(.08cm);
\filldraw [fill=white] (fan1) circle(.08cm);
\filldraw [fill=white] (fan2) circle(.08cm);
\filldraw [fill=white] (fanlastbutone) circle(.08cm);
\filldraw [fill=white] (fanlast) circle(.08cm);
\end{tikzpicture}
\caption{A chordless graph with strong chromatic index at least $3\Delta-2$. The vertices $a$, $b$ and $c$ have degree equal to $\Delta$.}
\label{fig:tightness}
\end{figure}
\section{Conclusion}
A graph is \emph{minimally 2-connected} if it is 2-vertex-connected and the removal of any edge from the graph makes its vertex connectivity less than two.
They are also exactly the 2-connected graphs that are also chordless~\cite{Dirac}.  Theorem~\ref{thm:main} therefore implies the following corollary.

\begin{corollary}
If $G$ is any minimally 2-connected graph with maximum degree $\Delta$, $\chi'_s(G)\leq 3\Delta$.
\end{corollary}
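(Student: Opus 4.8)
The plan is to reduce the statement to Theorem~\ref{thm:main} by way of the classical characterization of minimally $2$-connected graphs quoted just above: a graph is minimally $2$-connected if and only if it is $2$-connected and chordless (Dirac~\cite{Dirac}). Granting this, a minimally $2$-connected graph is in particular chordless, so Theorem~\ref{thm:main} applies verbatim and yields $\chi'_s(G)\le 3\Delta$. Hence the corollary is essentially immediate, and the only thing left to justify is the single direction actually used: every minimally $2$-connected graph is chordless.

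For that direction I would argue by contradiction, staying within the tools of Section~\ref{sec:prelim}. Suppose $G$ is minimally $2$-connected but some cycle $C$ of $G$ has a chord $e=uv$. Then $e$ splits $C$ into two $u$--$v$ paths $P$ and $Q$ with disjoint interiors, each of length at least $2$, since $uv\notin E(C)$. I claim $G-e$ is still $2$-connected, which contradicts the minimality of $G$. First, $G-e$ is connected, because $e$ lies on the cycle formed by $P$ together with $e$, so $e$ is not a bridge of $G$. Next, let $w\in V(G)$ be arbitrary; we must check that $(G-e)-w$ is connected. If $w\in\{u,v\}$, then $(G-e)-w=G-w$, which is connected because $G$ is $2$-connected. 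If $w\notin\{u,v\}$, then $w$ is internal to at most one of $P,Q$, so the other path survives intact in $G-w$; being a $u$--$v$ path that avoids $e$, it shows that $e$ is not a bridge of $G-w$, and therefore $(G-e)-w=(G-w)-e$ is connected. Since $|G|\ge 3$, this establishes that $G-e$ is $2$-connected, the desired contradiction. (One could also phrase the key step through Proposition~\ref{pro:notincycle}: after deleting any single vertex, $u$ and $v$ still lie together on a cycle, so no cutvertex of $G-e$ separates them.)

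I do not expect any genuine obstacle here. The only points requiring a little care are the degenerate cases -- the endpoints $u,v$ of the chord, and ensuring $|G|\ge 3$ so that $2$-connectivity is not vacuous -- both of which are dealt with above. Alternatively, one may simply cite Dirac~\cite{Dirac} for the characterization and omit the argument of the preceding paragraph entirely, after which the corollary drops out of Theorem~\ref{thm:main} directly.
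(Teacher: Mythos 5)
Your proposal matches the paper's proof exactly: the paper simply cites Dirac's characterization of minimally 2-connected graphs as the chordless 2-connected graphs and then applies Theorem~\ref{thm:main}. Your additional self-contained argument that a chord of a cycle is always a removable edge in a 2-connected graph is correct and just fills in the one direction of the cited characterization that is actually needed.
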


Notice that the graph in Figure~\ref{fig:tightness} is not 2-connected. It can be asked whether for minimally 2-connected graphs, the upper bound given by Theorem~\ref{thm:main} can be improved. We pose this as an open question.

\medskip

\noindent\textbf{Question. }
Can an upper bound of the form $2\Delta+c$, where $c$ is some constant, be found for the strong chromatic index of minimally 2-connected graphs?

\newpage
\appendix
\section{Proofs of Propositions}
Let $G$ be a graph that is not 2-connected. The following statements are folklore.

\medskip
\begin{claim}\label{clm:twoleafblocks}
If $G$ is connected (but not 2-connected) and $|V(G)|>1$, then $G$ has at least two leafblocks.
\end{claim}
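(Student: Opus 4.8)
The plan is to invoke the block--cut tree of $G$ (the ``block graph'' of \cite{Diestel}): let $T$ be the bipartite graph whose vertices are the blocks of $G$ together with the cutvertices of $G$, with a block $B$ joined to a cutvertex $c$ exactly when $c\in V(B)$. The one structural fact I would need is that, because $G$ is connected, $T$ is a tree. Connectedness of $T$ follows from connectedness of $G$, and $T$ is acyclic because a cycle in $T$ would fuse several blocks into one strictly larger $2$-connected subgraph of $G$, contradicting the maximality built into the definition of a block; this in turn rests on the standard observation that two distinct blocks of $G$ share at most one vertex, and that vertex is a cutvertex. Once $T$ is known to be a tree, the rest is short.

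First I would observe that $G$ has at least one cutvertex: $G$ is connected with $|V(G)|>1$ but not $2$-connected, so deleting some single vertex disconnects it. Next, every cutvertex $c$ of $G$ lies in at least two blocks of $G$, since $G-c$ has at least two components and two edges at $c$ going into distinct components of $G-c$ cannot lie in a common block (their block minus $c$ would be disconnected). Hence in $T$ every cutvertex-vertex has degree at least $2$; in particular $T$ has at least one cutvertex-vertex and therefore at least two block-vertices, so $|V(T)|\ge 3$. A finite tree on at least two vertices has at least two leaves, and since cutvertex-vertices of $T$ are not leaves, both of these leaves are block-vertices. Finally, a block that is a leaf of $T$ is joined in $T$ to exactly one cutvertex of $G$, i.e.\ its vertex set contains exactly one cutvertex of $G$, so it is a leafblock of $G$. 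Thus $G$ has at least two leafblocks.

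The main obstacle is precisely the structural input above---that $T$ is a tree (equivalently, that distinct blocks meet in at most one vertex, necessarily a cutvertex, and that no cyclic arrangement of blocks occurs)---while the remainder is bookkeeping. If one wishes to avoid the block--cut tree, the same conclusion follows by induction on $|V(G)|$: fix a cutvertex $v$, let $C_1,\dots,C_k$ with $k\ge 2$ be the components of $G-v$, and put $G_i=G[C_i\cup\{v\}]$. Each $G_i$ is connected with $2\le|V(G_i)|<|V(G)|$, the blocks of $G_i$ are exactly the blocks of $G$ contained in $C_i\cup\{v\}$, and a vertex of $C_i$ is a cutvertex of $G$ iff it is a cutvertex of $G_i$. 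If $G_i$ is $2$-connected it is itself a block of $G$ whose only cutvertex of $G$ is $v$, hence a leafblock of $G$; otherwise the induction hypothesis gives $G_i$ at least two leafblocks, of which at most one (the unique block of $G_i$ containing $v$, and only when $v$ is not a cutvertex of $G_i$) can fail to be a leafblock of $G$. Either way $G_i$ contains a leafblock $B_i$ of $G$ with $B_i\subseteq C_i\cup\{v\}$; taking $i=1$ and $i=2$ gives $V(B_1)\cap V(B_2)\subseteq\{v\}$, and since each block has at least two vertices $B_1\ne B_2$, so $G$ has two distinct leafblocks.
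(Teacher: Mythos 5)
Your proof is correct. Note that the paper itself offers no argument for this claim --- it is listed in the appendix among statements declared to be folklore --- so there is nothing to compare against; both of your routes (the block--cut tree argument and the induction on $|V(G)|$ over the pieces $G[C_i\cup\{v\}]$ hanging off a cutvertex $v$) are standard and complete, with the key points correctly handled: the existence of a cutvertex follows from connectedness, $|V(G)|>1$ and failure of 2-connectivity; every cutvertex lies in at least two blocks, so cutvertex-nodes are never leaves of the tree; and a leaf of the block--cut tree is a block meeting exactly one cutvertex, which is precisely the paper's definition of a leafblock.
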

\begin{claim}\label{clm:leafblockcomponent}
If $B$ is a leafblock of $G$ and $u$ is the cutvertex of $G$ in $B$, then $B-u$ is a component of $G-u$.
\end{claim}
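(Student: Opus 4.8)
The plan is to establish two facts about the leafblock $B$ and its cutvertex $u$: that $B-u$ is a nonempty connected subgraph of $G-u$, and that no edge of $G$ joins $V(B)\setminus\{u\}$ to $V(G)\setminus V(B)$. Together these say that $V(B)\setminus\{u\}$ is a maximal connected set of vertices in $G-u$ -- note that since $u\notin V(G-u)$, having a neighbour in $V(G-u)\setminus(V(B)\setminus\{u\})$ is the same as having a neighbour in $V(G)\setminus V(B)$ -- which is exactly the statement that $B-u$ is a component of $G-u$.

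The first fact is routine. A leafblock contains a cutvertex, and the only block on one vertex is a trivial $K_1$, which contains no cutvertex; hence $|V(B)|\ge 2$ and $V(B)\setminus\{u\}\ne\emptyset$. Since $B$ is either a single edge or a $2$-connected graph on at least three vertices, deleting the single vertex $u$ leaves a connected graph, so $B-u$ is nonempty and connected, and it is clearly a subgraph of $G-u$.

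The real content is the second fact, which I would deduce from the following sublemma: if $x$ lies in a block $B'$ of $G$ and has a neighbour $y\notin V(B')$, then $x$ is a cutvertex of $G$. Granting this, suppose for contradiction that $xy\in E(G)$ with $x\in V(B)\setminus\{u\}$ and $y\notin V(B)$; applying the sublemma with $B'=B$ makes $x$ a cutvertex of $G$ lying in $B$, so $B$ contains the two distinct cutvertices $u$ and $x$, contradicting that $B$ is a leafblock. One may simply cite the standard theory of the block--cutvertex tree for the sublemma (every edge lies in a unique block, and a vertex lying in two blocks is a cutvertex, the point being that $xy$ lies in a block other than $B'$). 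Alternatively, one can argue directly: if $x$ were not a cutvertex, then -- as deleting a non-cutvertex does not disconnect its component -- $y$ would be joined in $G-x$ by a path to some vertex of $V(B')\setminus\{x\}$; truncating this path at the first vertex $v\in V(B')$ it meets yields a path from $x$ through $y$ to $v$ with $v\ne x$ and all internal vertices outside $V(B')$. Adjoining this ear to $B'$ produces a $2$-connected subgraph of $G$ properly containing $B'$ (appending an ear to a $2$-connected graph preserves $2$-connectivity, and the new subgraph contains $y$), contradicting the maximality of $B'$ as a $2$-connected subgraph; hence $x$ is a cutvertex.

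I expect the main obstacle to be the sublemma -- specifically, making the ``ear'' argument (or the appeal to block--cutvertex-tree structure) precise -- while the reduction of the claim to this sublemma, and the connectivity of $B-u$, are straightforward.
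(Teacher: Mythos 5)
The paper never proves this claim: it is one of three statements declared ``folklore'' at the top of the appendix, so there is no proof of record to compare yours against. Your argument is correct and self-contained. The decomposition into (a) $B-u$ is nonempty and connected and (b) no edge of $G$ joins $V(B)\setminus\{u\}$ to $V(G)\setminus V(B)$ is the right one, and your sublemma --- a vertex of a block having a neighbour outside that block must be a cutvertex --- proved via the ear-extension contradiction with the maximality of the block, is the standard textbook route (equivalently, an appeal to the block--cutvertex tree); the application is clean, since a second cutvertex $x\neq u$ inside $B$ contradicts the definition of a leafblock. The only point you leave implicit is that a component of $G-u$ is an \emph{induced} subgraph of $G-u$, so to conclude that $B-u$ literally equals that component (rather than being a spanning connected subgraph of it with possibly fewer edges) you should also record that blocks are induced subgraphs of $G$: if $x,y\in V(B)$ with $xy\in E(G)\setminus E(B)$, then $B$ together with the edge $xy$ is still $2$-connected, contradicting maximality of $B$. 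That is a one-line addendum, not a gap in the main reasoning.
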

\begin{claim}\label{clm:cutvertexinpath}
Let $B$ be a leafblock of $G$ and $u$ the cutvertex of $G$ in $B$. If $P$ is a path between a vertex $v_1\in V(G)\setminus (B\setminus\{u\})$ and $v_2\in B\setminus \{u\}$, then $P$ contains $u$.
\end{claim}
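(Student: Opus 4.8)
The plan is to reduce everything to Claim~\ref{clm:leafblockcomponent}, which tells us that $B-u$ is a \emph{component} of $G-u$. The conceptual content is exactly this: once $u$ is deleted, the vertices of $B\setminus\{u\}$ form a connected piece with no edges leaving it, so the only way any path can pass between the interior of the leafblock and the rest of the graph is by using $u$ as a waypoint. I would therefore set up the argument as a contrapositive/contradiction: assume $P$ avoids $u$ and derive that it would witness a connection in $G-u$ between two distinct components.

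Concretely, I would first dispose of the degenerate case $v_1=u$: here $P$ has $u$ as an endpoint and so trivially contains $u$. For the main case $v_1\neq u$, I would observe that the hypothesis $v_1\in V(G)\setminus(B\setminus\{u\})$ together with $v_1\neq u$ forces $v_1\notin B$ entirely (this is the one spot that needs a careful reading of the set $V(G)\setminus(B\setminus\{u\})$, which is precisely $\{u\}$ together with the vertices outside $B$). Thus $v_1$ is a vertex of $G-u$ lying outside the component $B\setminus\{u\}$, hence in some \emph{other} component of $G-u$, while $v_2\in B\setminus\{u\}$ lies in the component $B\setminus\{u\}$ itself.

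Now suppose for contradiction that $P$ does not pass through $u$. Then $P$ is a path of $G-u$ joining $v_1$ to $v_2$. But $v_1$ and $v_2$ were just shown to lie in different components of $G-u$, so no path between them can exist in $G-u$ — a contradiction. Therefore $P$ must contain $u$, completing the proof. I do not expect any serious obstacle here; the only thing requiring attention is the bookkeeping that distinguishes $v_1=u$ from $v_1\notin B$, since the statement is phrased with the ambient vertex set $V(G)\setminus(B\setminus\{u\})$ rather than with $V(G)\setminus B$. All the real work is carried by Claim~\ref{clm:leafblockcomponent}.
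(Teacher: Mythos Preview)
Your argument is correct: the reduction to Claim~\ref{clm:leafblockcomponent} via the contrapositive is exactly the natural way to see this, and your handling of the boundary case $v_1=u$ is appropriate. Note that the paper itself does not supply a proof of this claim---it is listed among several statements declared to be ``folklore''---so there is nothing to compare against; your write-up would serve perfectly well as the missing justification.
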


\medskip

Let $G$ be a 2-connected graph and let $e$ be an edge of $G$ such that $G-e$ is not 2-connected.

\smallskip

\begin{claim}\label{clm:edgeacrosscutvertex}
If $u$ is any cutvertex of $G-e$, then $(G-e)-u$ has exactly two components and there is one endpoint of $e$ in each of them.
\end{claim}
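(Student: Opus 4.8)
The plan is to reduce everything to the elementary fact that deleting a single edge from a connected graph produces at most two components. Write $e=xy$. The first step is to observe that the cutvertex $u$ cannot be an endpoint of $e$: if $u=x$, then since $e$ is incident with $x$, the graph $(G-e)-u$ is exactly $G-x$, which is connected because $G$ is $2$-connected; this contradicts the assumption that $u$ is a cutvertex of $G-e$. The same argument rules out $u=y$. Hence $u\notin\{x,y\}$, and so $e$ is still an edge of the graph $G-u$.

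Next, I would use that $G-u$ is connected (again by the $2$-connectivity of $G$) together with the identity $(G-e)-u=(G-u)-e$. Deleting the single edge $e$ from the connected graph $G-u$ leaves a graph with at most two components; since $u$ is a cutvertex of $G-e$, this graph is disconnected, so it has exactly two components. This already gives the first half of the claim.

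For the second half I would show that $x$ and $y$ lie in different components of $(G-u)-e$. Suppose not; then there is a path $Q$ from $x$ to $y$ in $(G-u)-e$. Given any two vertices $a,b$ of $G-u$, a path between them in $G-u$ that happens to use the edge $e$ can be rerouted along $Q$ so as to avoid $e$, so $(G-u)-e$ would be connected, a contradiction. Therefore $x$ and $y$ lie in distinct components, and since there are exactly two of them, one component contains $x$ and the other contains $y$. I do not expect any genuine obstacle here; the only point that needs care is the degenerate case $u\in\{x,y\}$, where the expression $(G-e)-u$ must be read simply as $G-u$, which is precisely why that case is disposed of first.
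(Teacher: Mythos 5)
Your proof is correct and follows essentially the same route as the paper: both rest on the observation that $G-u$ is connected by $2$-connectivity, so that $(G-e)-u$ together with $e$ is connected, forcing exactly two components with one endpoint of $e$ in each. You are somewhat more careful than the paper in explicitly disposing of the degenerate case $u\in\{x,y\}$ before writing $(G-e)-u=(G-u)-e$, which the paper's phrasing glosses over.
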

\begin{proof}
Since $u$ is not a cutvertex of $G$, we know that $G-u$ is a connected graph. Therefore, adding the edge $e$ back to the graph $(G-e)-u$ should make the graph connected. It can be easily seen that this is possible only if $(G-e)-u$ contained exactly two components, each containing one endpoint of $e$.
\hfill\qed
\end{proof}

\begin{claim}\label{clm:endpointleafblock}
If $B$ is a leafblock of $G-e$ with $u$ being the unique cutvertex of $G-e$ in $B$, then there is exactly one endpoint of $e$ in $B-u$.
\end{claim}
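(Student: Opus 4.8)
The plan is to read the claim straight off the folklore facts that have already been recorded, namely Claims~\ref{clm:leafblockcomponent} and~\ref{clm:edgeacrosscutvertex}. First I would dispose of the degenerate case. If $|V(G)|=2$, then $G$ is a single edge, $G-e$ is a pair of isolated vertices, which has no cutvertex and hence no leafblock, so the statement is vacuously true. So I may assume $|V(G)|\ge 3$; then $G$, being $2$-connected on at least three vertices, has no bridge, so $e$ is not a bridge and $G-e$ is connected (but not $2$-connected, by hypothesis). In particular the hypothesis that $B$ is a leafblock of $G-e$ with a cutvertex $u$ of $G-e$ lying in it is meaningful, and $u$ is genuinely a cutvertex of $G-e$.

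Now I would combine two observations about $(G-e)-u$. On the one hand, since $B$ is a leafblock of $G-e$ with cutvertex $u$, Claim~\ref{clm:leafblockcomponent} gives that $B-u$ is a component of $(G-e)-u$; in particular $B-u\neq\emptyset$. On the other hand, $u$ is a cutvertex of $G-e$, so Claim~\ref{clm:edgeacrosscutvertex} applies and tells us that $(G-e)-u$ has exactly two components, with one endpoint of $e$ in each. Since $B-u$ is one of these exactly two components, and the two components are vertex-disjoint with each containing precisely one endpoint of $e$, the set $B-u$ contains exactly one endpoint of $e$. That is precisely the assertion of the claim.

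I do not expect any real obstacle here: the whole content is already packaged inside the two cited claims, and the only things requiring a word of care are the trivial $|V(G)|=2$ case and the (immediate) remark that the cutvertex $u$ of the leafblock $B$ is a cutvertex of $G-e$ as a whole, so that Claim~\ref{clm:edgeacrosscutvertex} is applicable. If one prefers not to invoke Claim~\ref{clm:edgeacrosscutvertex}, the same conclusion follows directly: if $B-u$ contained zero or both endpoints of $e$, then adding $e$ back would leave $B-u$ as a component of $G-u$, and since $(G-e)-u$ has at least one further component (as $u$ is a cutvertex of $G-e$), this would make $G-u$ disconnected, contradicting the $2$-connectedness of $G$.
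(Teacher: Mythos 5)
Your proposal is correct and follows essentially the same route as the paper: it combines Claim~\ref{clm:leafblockcomponent} (that $B-u$ is a component of $(G-e)-u$) with Claim~\ref{clm:edgeacrosscutvertex} (that each of the two components of $(G-e)-u$ contains exactly one endpoint of $e$). The extra remarks about the degenerate case and the alternative argument are fine but not needed.
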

\begin{proof}
Let $u$ be the unique cutvertex of $G-e$ that is contained in $B$. From (\ref{clm:leafblockcomponent}), we know that $B-u$ is a component of $(G-e)-u$.
By (\ref{clm:edgeacrosscutvertex}), we know that each component of $(G-e)-u$ contains exactly one endpoint of $e$. 
Therefore, there is exactly one endpoint of $e$ in $B-u$.
\hfill\qed
\end{proof}

\medskip

We shall now prove Propositions \ref{pro:leafblocks}, \ref{pro:2connpaths} and \ref{pro:deg2block}.

\medskip

\noindent\textbf{Proof of Proposition \ref{pro:leafblocks}: }
If $G$ is a single edge, then $e$ has to be that single edge, and statements \ref{proit:endpnotcut} and \ref{proit:endpnotincycle} of the proposition are easily seen to be true. In this case, the statement \ref{proit:twoleafblocks} is not relevant.

Suppose $G$ is not a single edge.
It can be easily seen that $G-e$ is a connected graph as otherwise, $G$ would not have been 2-connected. Also $G-e$ has at least two vertices as it should contain the two endpoints of $e$. Therefore, from (\ref{clm:twoleafblocks}), we know that there are at least two leafblocks in $G-e$. Combining this with (\ref{clm:endpointleafblock}), it is clear that there are exactly two leafblocks in $G-e$, with the two endpoints of $e$ being in different leafblocks. Note that (\ref{clm:endpointleafblock}) also tells us that the endpoints of $e$ are not the cutvertices in the respective leafblocks in which they occur. We have thus proved \ref{proit:twoleafblocks} and \ref{proit:endpnotcut}. 

Let $B_1$ and $B_2$ be the two leafblocks of $G-e$ and let $u_1$ and $u_2$ be the cutvertices of $G-e$ in $B_1$ and $B_2$ respectively. Let $e=xy$ and assume without loss of generality that $x\in V(B_1)\setminus \{u_1\}$ and $y\in V(B_2)\setminus\{u_2\}$. Clearly, from (\ref{clm:leafblockcomponent}), $x$ and $y$ are in different components of $(G-e)-u_1$, and therefore, from Proposition~\ref{pro:notincycle}, there can be no cycle that contains $x$ and $y$ in $G-e$.
Thus, we have proved \ref{proit:endpnotincycle}.\hfill\qed

\medskip

\noindent\textbf{Proof of Proposition \ref{pro:remleafblock}: }
We claim that if $P$ is any path in $G$ between vertices $v_1,v_2\in V(G-X)$, then $P\cap \bigcup_{i=1}^k (B_i\setminus\{u_i\})=\emptyset$. Suppose not. Let $x\in (B_t\setminus\{u_t\})$ be a vertex in $P$. Then, $P$ is of the form $v_1PxPv_2$. From (\ref{clm:cutvertexinpath}), $v_1Px$ has to be of the form $v_1Pu_tPx$ and $xPv_2$ has to be of the form $xPu_tPv_2$. This means that the vertex $u_t$ occurs twice in $P$, which contradicts the assumption that $P$ is a path. This, combined with the fact that $G$ is connected, shows that between any two vertices in $V(G-X)$, we have a path in $G$ that does not use any vertex in $\bigcup_{i=1}^k (B_i\setminus\{u_i\})$. This path remains a path in the graph $G-X$ and therefore, the graph $G-X$ is connected.
\hfill\qed

\medskip

\noindent\textbf{Proof of Proposition \ref{pro:2connpaths}: }
If either $y=x$ or $z=x$, there is nothing to prove as one of the paths can be taken to be just $\{x\}$.
Suppose this is not the case. Since $G$ is 2-connected, we have $|N(x)|\geq 2$.
Then, by Menger's Theorem, there exist at least two vertex disjoint paths between $N(x)$ and $\{y,z\}$ in $G$ that do not contain $x$. The vertex $x$ can be appended to each of these paths to obtain the required paths $P_1$ and $P_2$.
\hfill\qed

\medskip

\noindent\textbf{Proof of Proposition \ref{pro:deg2block}: }
Let $H$ be any non-trivial subgraph of $G$.

If $H$ is not connected, then consider the largest connected component $H'$ of $H$. If $|V(H')|=1$, then every component of $H$ consists of a single vertex. Since $H$ has at least two connected components, and because the vertices that make up each of these components are vertices with degree 0 in $H$, any two of these vertices are two vertices with degree at most 2 in $H$. If $|V(H')|>1$, then continue the argument setting $H=H'$ (as $H'$ is connected) to find two vertices with degree at most 2 in $H'$, and these will also have degree at most 2 in $H$.

If $H$ is 2-connected, then the statement of the proposition already tells us that $H$ contains at least two vertices of degree at most 2.
If $H$ is connected and not 2-connected, then we know by (\ref{clm:twoleafblocks}) that $H$ contains at least two leafblocks, say $B_1$ and $B_2$ (recall that $H$ is non-trivial). Let us consider $B_1$. Since $B_1$ is a 2-connected subgraph of $G$, we know that it contains at least two vertices of degree at most 2. As at most one of these vertices could be the cutvertex of $H$ in $B_1$, we can always choose a vertex $u$ from these two vertices such that $u$ is not a cutvertex of $H$. Clearly, every neighbour of $u$ in $H$ is in $B_1$, which implies that $d_H(u)=d_{B_1}(u)=2$. We can similarly find a vertex of degree at most 2 in $H$ in $B_2$, and it is clear that this vertex will be distinct from $u$. We have thus shown that there exist two vertices of degree at most 2 in $H$.
\hfill\qed

\end{document}